\numberwithin{equation}{section}
\newtheorem{theorem}{Theorem}[section]
\newtheorem{definition}[theorem]{Definition}
\newtheorem{proposition}[theorem]{Proposition}
\theoremstyle{definition}
\newtheorem{example}[theorem]{Example}
\newtheorem{remark}[theorem]{Remark}
\newcommand{\E}{\mathbb{E}}
\newcommand{\calA}{\mathcal A}
\newcommand{\calH}{\mathcal H}
\newcommand{\R}{\mathbb R}
\newcommand{\calZ}{\mathcal Z}
\newcommand{\bbH}{\mathbb H}
\renewcommand{\t}{t\in[0,T]}
\title{Optimal control in linear-quadratic stochastic advertising models with memory}
\author{Michele Giordano$^{1}$\\\href{mailto:michelgi@math.uio.no}{michelgi@math.uio.no}
   \and Anton Yurchenko-Tytarenko$^1$ \\ \href{mailto:antony@math.uio.no}{antony@math.uio.no}}
\date{
    $^1$Department of Mathematics, University of Oslo
}
\begin{document}

\maketitle

\begin{abstract}
    This paper deals with a class of optimal control problems which arises in advertising models with Volterra Ornstein-Uhlenbeck process representing the product goodwill. Such choice of the model can be regarded as a stochastic modification of the classical Nerlove-Arrow model that allows to incorporate both presence of uncertainty and empirically observed memory effects such as carryover or distributed forgetting. We present an approach to solve such optimal control problems based on an infinite dimensional lift which allows us to recover Markov properties by formulating an optimization problem equivalent to the original one in a Hilbert space. Such technique, however, requires the Volterra kernel from the forward equation to have a representation of a particular form that may be challenging to obtain in practice. We overcome this issue for H\"older continuous kernels by approximating them with Bernstein polynomials, which turn out to enjoy a simple representation of the required type. Then we solve the optimal control problem for the forward process with approximated kernel instead of the original one and study convergence. The approach is illustrated with simulations.
\end{abstract}

\textbf{Keywords:} \textit{Dynamic programming, Volterra Ornstein-Uhlenbeck process,
infinite-dimensional Bellman equations,  optimal advertising}
    
    \textbf{MSC 2020:} {60H20, 92E20, 91B70, 90B60}

\paragraph{Funding.} The present research is carried out within the frame and support of the ToppForsk project nr. 274410 of the Research Council of Norway with title STORM: Stochastics for Time-Space Risk Models.

\section{Introduction}

The problem of optimizing advertising strategies has always been of paramount importance in the field of marketing. Starting from the pioneering works of Vidale and Wolfe \cite{VW1957} and Nerlove and Arrow \cite{NA1962}, this topic has evolved into a full-fledged field of research and modeling. Realizing the impossibility of describing all existing classical approaches and results, we refer the reader to the review article of Sethi \cite{S1977} (that analyzes the literature prior to 1975) and a more recent paper by Feichtinger, Hartl and Sethi \cite{FHS1994} (covering the results up to 1994) and references therein.

It is worth noting that the Nerlove--Arrow approach, which was the foundation for numerous modern dynamic advertising models, assumed no time lag between spending on advertising and the impact of the latter on the goodwill stock. However, many empirical studies (see, for example, \cite{Leone1995}) clearly indicate some kind of a ``memory'' phenomenon that is often called the ``distributed lag'' or ``carryover'' effect: the influence of advertising does not have an immediate impact but is rather spread over a period of time varying from several weeks to several months. This shortcoming of the basic Nerlove--Arrow model gave rise to many modifications of the latter aimed at modeling distributed lags. For a long time, nevertheless, the vast majority of dynamic advertising models with distributed lags had been formulated in a deterministic framework (see e.g. \cite[\S 2.6]{S1977} and \cite[Section 2.3]{FHS1994}).

In recent years, however, there have been several landmark papers that consider the Nerlove-Arrow-type model with memory in a stochastic setting. Here, we refer primarily to the series of papers \cite{GM2006,GMS2009} (see also a more recent work \cite{LZh2020}), where goodwill stock is modeled via Brownian linear diffusion with delay of the form
\begin{equation}\label{eq: intro SDDE}
    dX^u(t) = \left( \alpha_0 X^u(t) + \int_{-r}^0 \alpha_1(s) X^u(t+s)ds + \beta_0 u(t) + \int_{-r}^0 \beta_1(s) u(t+s) ds \right)dt + \sigma dW(t),
\end{equation}
where $X^u$ is interpreted as the product's goodwill stock and $u$ is the spending on advertising. The corresponding optimal control problem in this case was solved using the so-called lift approach: equation \eqref{eq: intro SDDE} was rewritten as a stochastic differential equation (without delay) in a suitable Hilbert space, and then infinite-dimensional optimization techniques (either dynamic programming principle or maximum principle) were applied.

In this article, we present an alternative stochastic model that also takes the carryover effect into account. Instead of the delay approach described above, we incorporate the memory into the model by means of the Volterra kernel $K \in L^2([0,T])$ and consider the controlled Volterra Ornstein-Uhlenbeck process of the form 
\begin{equation}\label{Forward_introduction}
    X^u(t)=X(0)+\int_0^t K(t-s)\Big(\alpha u(s)-\beta X^u(s)\Big)ds+\sigma\int_0^t K(t-s) dW(s),
\end{equation}
where $\alpha,\beta,\sigma > 0$ and $X(0) \in \mathbb R$ are constants (see e.g. \cite[Section 5]{AffineVolterraProcesses} for more details on affine Volterra processes of such type). Note that such goodwill dynamics can be regarded as the combination of deterministic lag models described in \cite[Section 2.3]{FHS1994} and the stochastic Ornstein-Uhlenbeck-based model presented by Rao \cite{Rao1986}. The main difference from \eqref{eq: intro SDDE} is the memory incorporated to the noise along with the drift as the stochastic environment (represented by the noise) tends to form ``clusters'' with time. Indeed, in reality positive increments are likely to be followed by positive increments (if conditions are favourable for the goodwill during some period of time) and negative increments tend to follow negative increments (under negative conditions). This behaviour of the noise cannot be reflected by a standard Brownian driver but can easily be incorporated into the model \eqref{Forward_introduction}.

Our goal is to solve an optimization problem of the form
\begin{equation}\label{eq: optimization problem}
    \begin{cases}
        X^u(t)=X(0)+\int_0^t K(t-s)\Big(\alpha u(s)-\beta X^u(s)\Big)ds+\sigma\int_0^t K(t-s) dW(s),
        \\
        J(u) := \E\left[-\int_0^Ta_1u^2(s)ds+a_2X^u(T)\right] \to \max,
    \end{cases}
\end{equation}
where $a_1,a_2 > 0$ are given constants. The set of admissible controls for the problem \eqref{eq: optimization problem}, denoted by $L^2_a := L^2_a(\Omega\times[0,T])$, is the space of square integrable real-valued stochastic processes adapted to the filtration generated by $W$. Note that the process $X^u$ is well defined for any $u\in L^2_a$ since, for almost all $\omega\in\Omega$, the equation \eqref{Forward_introduction} treated pathwisely can be considered as a deterministic linear Volterra integral equation of the second kind that has a unique solution (see e.g. \cite{Tricomi_1985}).  

The optimization problem \eqref{eq: optimization problem} for underlying Volterra dynamics has been studied by several authors (see, e.g. \cite{AO,Yong} and the bibliography therein). Contrarily to most of the work in our bibliography, we will not solve such problem by means of a maximum principle approach. Even though this method allows to find necessary and sufficient conditions to obtain the optimal control to \eqref{eq: optimization problem}, we cannot directly apply it as we deal with low regularity conditions on the coefficients of our drift and volatility. Furthermore, such method has another notable drawback in the practice. In fact, its application is often associated with computations of conditional expectations that are substantially challenging due to the absence of Markovianity. Another possible method to solve the optimal control problem \eqref{eq: optimization problem} is to get an explicit solution of the forward equation \eqref{Forward_introduction}, plug it into the performance functional and try to solve the maximization problem using differential calculus in Hilbert spaces. But, even though this method seems appealing, obtaining the required explicit representation of $X^u$ in terms of $u$ might be tedious and burdensome. Instead, we will use the approach introduced in \cite{Pham,DNG_Lift} that is in the same spirit of the one in \cite{GM2006,GMS2009,LZh2020} mentioned above: we will rewrite the original forward stochastic Volterra integral equation as a stochastic differential equation in a suitable Hilbert space and then apply standard optimization techniques in infinite dimensions (see e.g. \cite{FGSw2017,FT}). Moreover, the shape of the corresponding infinite-dimensional Hamilton-Jacobi-Bellman equation allows to obtain an explicit solution to the latter by exploiting the ``splitting'' method from \cite[Section 3.3]{GMS2009}.

We notice that, while the optimization problem \eqref{eq: optimization problem} is closely related to the one presented in \cite{Pham}, there are several important differences in comparison to our work. In particular, \cite{Pham} demands the kernel to have the form \begin{equation}\label{Kernel Laplace transform}
    K(t) = \int_{\mathbb R_+} e^{-\theta t} \mu(d\theta),
\end{equation}
where $\mu$ is a signed measure such that $\int_{\mathbb R_+} (1 \wedge \theta^{-1/2}) |\mu|(d\theta) < \infty$. Although there are some prominent examples of such kernels, not all kernels $K$ are of this type; furthermore, even if a particular $K$ admits such a representation in theory, it may not be easy to find the explicit shape of $\mu$. In contrast, our approach works for \textit{all H\"older continuous kernels} without any restrictions on the shape and allows to get explicit approximations $\hat u_n$ of the optimal control $\hat u$. The lift procedure presented here is also different from the one used in \cite{Pham} (although they both are specific cases of the technique presented in \cite{CT}).

The lift used in the present paper was introduced in \cite{CT}, then generalized in \cite{CTMulti} for the multi-dimensional case,
but the approach itself can be traced back to \cite{CC1998}. It should be also emphasised that this method has its own limitations: in order to perform the lift, the kernel $K$ is required to have a specific representation of the form $K(t) = \langle g, e^{t\calA} \nu \rangle_{\mathbb H}$, $\t$, where $g$ and $\nu$ are elements of some Hilbert space $\mathbb H$ and $\{e^{t\calA},~t\in[0,T]\}$ is a uniformly continuous semigroup acting on $\mathbb H$ with $\calA \in \mathcal L(\mathbb H)$ and, in general, it may be hard to find feasible $\mathbb H$, $g$, $\nu$ and $\calA$. Here, we work with H\"older continuous kernels $K$ and we overcome this issue by approximating the kernel with Bernstein polynomials (which turn out to enjoy a simple representation of the required type). Then we solve the optimal control problem for the forward process with approximated kernel instead of the original one and we study convergence.

The paper is organised as follows. In section \ref{sec: lift}, we present our approach in case of a \emph{liftable} $K$ (i.e. $K$ having a representation in terms of $\mathbb H$, $g$, $\nu$ and $\calA$ mentioned above). Namely, we describe the lift procedure, give the necessary results from stochastic optimal control theory in Hilbert spaces as well as derive an explicit representation of the optimal control $\hat u$ by solving the associated Hamilton-Jacobi-Bellman equation. In section \ref{sec: approximation}, we introduce a liftable approximation for general H\"older continuous kernels, give convergence results for the solution to the approximated problem and discuss some numerical aspects for the latter. In section \ref{sec: simulations}, we illustrate the application of our technique with examples and simulations.


\section{Solution via Hilbert space-valued lift}\label{sec: lift}

\subsection{Preliminaries}

First of all, let us begin with some simple results on the optimization problem \eqref{eq: optimization problem}. Namely, we notice that $X_n^u$ and the optimization problem \eqref{eq: optimization problem} is well defined for any $u\in L^2_a$.
\begin{theorem}\label{th: appendix theorem} Let $K\in L^2([0,T])$. Then, for any $u \in L^2_a$,
    \begin{itemize}
        \item[1)] the forward Volterra Ornstein-Uhlenbeck-type equation \eqref{Forward_introduction} has a unique solution;
        \item[2)] there exists a constant $C>0$ such that
        \[
            \sup_{t\in[0,T]} \mathbb E[|X^u(t)|^2] \le C(1+ \lVert u\rVert^2_2),
        \]
        where $\lVert\cdot\rVert_2$ denotes the standard $L^2(\Omega\times[0,T])$ norm;
        \item[3)] $|J(u)| < \infty$.
    \end{itemize}
\end{theorem}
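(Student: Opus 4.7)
The three claims build on one another, so I would tackle them in order.

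For part 1), I would read \eqref{Forward_introduction} as a pathwise linear Volterra integral equation of the second kind with convolution kernel $\beta K$. Setting
$$f(t,\omega) := X(0) + \alpha \int_0^t K(t-s)\, u(s,\omega)\,ds + \sigma \int_0^t K(t-s)\,dW(s),$$
the equation takes the form $X^u(t) = f(t) - \beta\,(K * X^u)(t)$. Using Cauchy--Schwarz and Fubini on the drift convolution, together with the It\^o isometry and Fubini for the stochastic convolution, I would first check that $f(\cdot,\omega)\in L^2([0,T])$ for $\mathbb{P}$-a.e.\ $\omega$. The resulting pathwise equation is a standard linear Volterra equation of the second kind with square-integrable kernel, whose unique $L^2$-solvability is classical (see \cite{Tricomi_1985}). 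To additionally preserve adaptedness I would run a Picard iteration under a Bielecki-type weighted norm $\|g\|_\lambda^2 = \int_0^T e^{-\lambda t}|g(t)|^2\,dt$: for $\lambda$ large enough the map $X \mapsto f - \beta\,(K*X)$ becomes a contraction in this norm, and since every iterate inherits adaptedness from $u$ and the stochastic convolution, so does its limit.

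For part 2), I would square \eqref{Forward_introduction}, apply $(a+b+c)^2 \le 3(a^2+b^2+c^2)$ followed by $(\alpha u - \beta X^u)^2 \le 2\alpha^2 u^2 + 2\beta^2 (X^u)^2$, and then use Cauchy--Schwarz on the drift convolution (which produces a factor $\int_0^t K(t-s)^2\,ds \le \|K\|_{L^2([0,T])}^2$) together with the It\^o isometry on the stochastic convolution. Taking expectations and swapping the order of integration yields an estimate of the form
$$\mathbb{E}|X^u(t)|^2 \le C_1 + C_2 \|u\|_2^2 + C_3 \int_0^t \mathbb{E}|X^u(s)|^2\,ds,$$
with constants depending only on $X(0), \alpha, \beta, \sigma, T$ and $\|K\|_{L^2([0,T])}$; Gronwall's lemma then delivers the uniform bound asserted in 2). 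Part 3) follows immediately: by Cauchy--Schwarz on $\Omega$,
$$|J(u)| \le a_1 \|u\|_2^2 + a_2\,\mathbb{E}|X^u(T)| \le a_1 \|u\|_2^2 + a_2\,\bigl(\sup_{t\in[0,T]} \mathbb{E}|X^u(t)|^2\bigr)^{1/2},$$
and both summands are finite thanks to $u \in L^2_a$ and part 2).

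The main obstacle is part 1): because $K$ is merely square-integrable and the Volterra operator need not be a contraction in the usual supremum norm, one has to combine the pathwise Volterra theory with an adaptedness-preserving Picard scheme formulated in a suitable weighted $L^2$-norm. Once that is in place, the moment bound and the finiteness of $J(u)$ are essentially routine.
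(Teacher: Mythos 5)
Your argument is correct, and for items 1) and 2) it coincides with the paper's: the equation is treated pathwise as a linear Volterra equation of the second kind with $L^2$ kernel (citing the classical theory), and the moment bound comes from squaring, Cauchy--Schwarz on the drift convolution, the It\^o isometry on the stochastic convolution, and Gronwall. Your extra care in 1) about adaptedness via a Picard scheme in a Bielecki-weighted $L^2$ norm is sound (and more detailed than the paper, which simply declares pathwise uniqueness "evident"); adaptedness can equivalently be read off from the resolvent representation of the solution in terms of the adapted forcing term. The genuine difference is in item 3): you bound $|J(u)| \le a_1\|u\|_2^2 + a_2\,\mathbb{E}|X^u(T)|$ and invoke part 2) via Jensen/Cauchy--Schwarz, which is shorter and entirely sufficient for finiteness. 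The paper instead passes to the deterministic Volterra equation satisfied by $\mathbb{E}[X^u(t)]$, solves it with the resolvent $R_\beta$, and thereby writes $J(u) = -a_1\langle u,u\rangle_{L^2(\Omega\times[0,T])} + a_2\bigl(X(0) + \mathcal{L}u\bigr)$ with $\mathcal{L}$ linear and continuous. That detour is more than is needed for item 3), but it is not wasted: this linear-quadratic representation \eqref{eq: LQ-form} is reused later (in the proof that the original problem has a unique maximizer $\hat u$ and that $\hat u_n \to \hat u$ weakly), so if you only prove finiteness your way, that structural fact would still have to be established separately.
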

\begin{proof}
    Item 1) is evident since, for almost all $\omega\in\Omega$, the equation \eqref{Forward_introduction} treated pathwisely can be considered as a deterministic linear Volterra integral equation of the second kind that has a unique solution (see e.g. \cite{Tricomi_1985}). Next, it is straightforward to deduce that
    \begin{align*}
        \E \left[|X^u(t)|^2\right] &\le C\bigg(1+ \E\left[\left(\int_0^t K(t-s) u(s)ds\right)^2\right] + \E\left[\left(\int_0^t K(t-s) X^u(s)ds\right)^2\right] 
        \\
        &\qquad+ \E\left[\left(\int_0^t K(t-s) dW(s)\right)^2\right] \bigg)
        \\
        & \le C \left(1 +   \|K\|_2^2 \|u\|_2^2  +  \|K\|_2^2 \int_0^t \E\left[|X^u(s)|^2\right] ds + \|K\|_2^2\right)
        \\
        &\le C \left(1 +  \|u\|_2^2  +  \int_0^t \E\left[|X^u(s)|^2\right] ds\right).
    \end{align*}
    Now, item 2) follows from Gronwall's inequality. Finally, $\E [X^u(t)]$ satisfies the deterministic Volterra equation of the form
    \[
        \E[X^u(t)]= - \beta \int_0^t K(t-s)  \E[X^u(s)] ds + X(0) + \alpha\int_0^t K(t-s) \E[u(s)]ds 
    \]
    and hence can be represented in the form
    \begin{equation*}
    \begin{aligned}
        \E[X^u(t)] & = X(0) + \alpha \int_0^t K(t-s)\E[u(s)]ds - \beta \int_0^t R_\beta(t,s) X(0) ds \\
        &\quad - \alpha\beta\int_0^t R_\beta(t,s)\int_0^s K(s-v) \E[u(v)] dv ds
        \\
        & =: X(0) + \mathcal L u,
    \end{aligned}    
    \end{equation*}
    where $R_\beta$ is the resolvent of the corresponding Volterra integral equation and the operator $\mathcal L$ is linear and continuous. Hence $J(u)$ can be re-written as
    \begin{equation}\label{eq: LQ-form}
        J(u) =  -a_1\langle u,u \rangle_{L^2(\Omega\times [0,T]} + a_2 (X(0) + \mathcal L u),
    \end{equation}
    which immediately implies that $|J(u)| < \infty$. 
\end{proof}

\subsection{Construction of Markovian lift and formulation of the lifted problem} 

As anticipated above, in order to solve the optimization problem \eqref{eq: optimization problem} we will rewrite $X^u$ in terms of Markovian Hilbert space-valued process $\calZ^u$ using the lift presented in \cite{CT} and then apply the dynamic programming principle in Hilbert spaces. We start from the description of the core idea behind the Markovian lifts in case of liftable kernels. 

\begin{definition}
    Let $\bbH$ denote a separable Hilbert space with the scalar product $\langle \cdot, \cdot\rangle$. A kernel $K\in L^2([0,T])$ is called $\bbH$-\emph{liftable} if there exist $\nu, g\in \bbH$, $\lVert \nu \rVert_{\mathbb H} = 1$, and a uniformly continuous semigroup $\{e^{t\calA},~t\in[0,T]\}$ acting on  $\bbH$, $\calA \in \mathcal L(\mathbb H)$, such that
    \begin{equation}\label{Kernel_representation}
        K(t)=\langle g, e^{t\calA}\nu \rangle, \quad \t.
    \end{equation}
\end{definition}
For examples of liftable kernels, we refer to Section \ref{sec: simulations} and to \cite{CT}.

Consider a controlled Volterra Ornstein-Uhlenbeck process of the form \eqref{Forward_introduction} with a liftable kernel $K(t) = \langle g, e^{t\calA}\nu \rangle$, $\lVert \nu \rVert_{\mathbb H} = 1$, and denote $\zeta_0:=\frac{X(0)}{\|g\|_{\bbH}^2}g$ and
\[
    dV^u(t):=(\alpha u(t)-\beta X^u(t))dt+\sigma dW(t).
\]
Using the fact that $X(0)=\langle g, \zeta_0\rangle$, we can now rewrite \eqref{Forward_introduction} as follows:
\begin{align*}
	X^u(t) & = X(0)+\int_0^t K(t-s) dV^u(s) 
	\\
	&=\langle g, \zeta_0 \rangle + \int_0^t\langle g,e^{(t-s)\calA}\nu\rangle dV^u(s) 
	\\
	&=\left \langle g, \zeta_0+\int_0^t e^{(t-s)\calA} \nu dV^u(s) \right\rangle 
	\\
	&=:\langle g,\widetilde\calZ_t^u\rangle,
\end{align*}
where $\widetilde \calZ^u_t:=\zeta_0+\int_0^t e^{\calA t-s}\nu dV^u(s)$. It is easy to check that, $\widetilde\calZ^u$ is the unique solution of the infinite dimensional SDE
\begin{equation*}
    \widetilde\calZ_t^u = \zeta_0+ \int_0^t \Big(\calA(\widetilde\calZ_s^u-\zeta_0) + (\alpha u(s)-\beta\langle g,\widetilde\calZ_s^u\rangle) \nu \Big)ds + \int_0^t\sigma \nu dW(s)
\end{equation*}
and thus the process $\{\calZ_t^u, \t \}$ defined as $\calZ^u_t := \widetilde\calZ^u_t-\zeta_0$ satisfies the infinite dimensional SDE of the form
\begin{equation*}
     \calZ_t^u=\int_0^t\Big(\bar \calA\calZ_s^u-\nu\beta\langle g,\zeta_0\rangle+\nu \alpha u(s)\Big) ds +\int_0^t \sigma \nu dW(s),
\end{equation*}
where $\bar \calA$ is the linear bounded operator on $\mathbb H$ such that 
\begin{equation*}
    \bar\calA z:=\calA z-\beta\langle g, z \rangle \nu, \quad z\in\mathbb H.
\end{equation*}

These findings are summarized in the following theorem.

\begin{theorem}\label{th: H-valued representation}
    Let $\{X^u(t), \t\}$ be a Volterra Ornstein-Uhlenbeck process of the form \eqref{Forward_introduction} with the $\mathbb H$-liftable kernel $K(t) = \langle g, e^{t\calA}\nu \rangle$, $g,\nu\in\mathbb H$, $\lVert \nu \rVert_{\mathbb H} = 1$, $\calA\in\mathcal L(\mathbb H)$. Then, for any $\t$,
    \begin{equation}\label{eq: representation}
        X^u(t) = \langle g, \zeta_0 \rangle + \langle g, \calZ^u_t \rangle,
    \end{equation}
    where $\zeta_0:=\frac{X(0)}{\|g\|_{\bbH}^2}g$ and $\{\calZ^u_t,~\t\}$ is the $\mathbb H$-valued stochastic process given by
    \begin{equation}\label{Forward_lifted_1}
        \calZ_t^u=\int_0^t\Big(\bar \calA\calZ_s^u-\nu\beta\langle g,\zeta_0\rangle+\nu \alpha u(s)\Big) ds +\int_0^t \sigma \nu dW(s)
    \end{equation}
    and $\bar \calA \in \mathcal L(\mathbb H)$ is such that 
    \begin{equation*}
        \bar\calA z:=\calA z-\beta\langle g, z \rangle \nu, \quad z\in\mathbb H.
    \end{equation*}
\end{theorem}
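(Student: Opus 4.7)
The plan is to follow the computation already sketched in the paragraphs preceding the theorem and organise it into three verifiable steps, then address the one analytical subtlety that the informal derivation glosses over.

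First, I would substitute the liftable representation $K(t-s) = \langle g, e^{(t-s)\calA}\nu\rangle$ into \eqref{Forward_introduction}, writing the equation in terms of the integrator $dV^u(s) := (\alpha u(s) - \beta X^u(s))ds + \sigma dW(s)$. Since $\langle g, \cdot \rangle : \mathbb H \to \R$ is a bounded linear functional, it may be pulled outside both the Lebesgue and the It\^o integrals, yielding
\[
    X^u(t) \;=\; \left\langle g, \; \zeta_0 + \int_0^t e^{(t-s)\calA}\nu\, dV^u(s) \right\rangle,
\]
with $\langle g, \zeta_0\rangle = X(0)$ following immediately from $\zeta_0 = X(0)g/\|g\|_{\mathbb H}^2$. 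This establishes \eqref{eq: representation} as soon as the process $\calZ^u$ is identified with the stochastic integral term.

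Second, I would define $\widetilde\calZ^u_t := \zeta_0 + \int_0^t e^{(t-s)\calA}\nu\, dV^u(s)$ and derive an SDE for it. Because $\calA \in \mathcal L(\mathbb H)$, the semigroup is a genuine operator exponential with $e^{(t-s)\calA} = e^{t\calA}e^{-s\calA}$, so factoring $e^{t\calA}$ outside the integral gives $\widetilde\calZ^u_t - \zeta_0 = e^{t\calA}M_t$ with $M_t := \int_0^t e^{-s\calA}\nu\, dV^u(s)$. Applying an $\mathbb H$-valued integration-by-parts / It\^o formula, which is legitimate here because $t \mapsto e^{t\calA}$ is differentiable in operator norm, produces $d\widetilde\calZ^u_t = \calA(\widetilde\calZ^u_t - \zeta_0)dt + \nu\, dV^u(t)$.

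Finally, I would substitute the explicit form of $dV^u$ and use the already-established identity $X^u(t) = \langle g, \widetilde\calZ^u_t\rangle$ to eliminate $X^u$, then shift by $\zeta_0$ by setting $\calZ^u_t := \widetilde\calZ^u_t - \zeta_0$. Collecting the $\langle g, \calZ^u_t\rangle$ contribution into the operator $\bar\calA z := \calA z - \beta\langle g, z\rangle\nu$ reproduces \eqref{Forward_lifted_1} verbatim; the drift is Lipschitz in the $\mathbb H$-norm (since $\calA$, $g$, $\nu$ are all bounded), which ensures that the lifted SDE has a unique strong solution and justifies the use of the definite article in ``the process $\calZ^u$''.

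The main obstacle is the step in the second paragraph: interchanging the bounded operator $e^{-s\calA}$ with the stochastic integral and then differentiating $e^{t\calA}M_t$ in $\mathbb H$. In the uniformly continuous semigroup setting both manipulations reduce to routine applications of the operator-norm differentiability of $t\mapsto e^{t\calA}$ and the $\mathbb H$-valued It\^o isometry, but they need to be written out carefully since the text preceding the theorem presents them only at the level of a formal calculation.
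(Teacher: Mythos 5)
Your proposal is correct and follows essentially the same route as the paper: substitute the liftable representation of $K$, pull the functional $\langle g,\cdot\rangle$ out of the integrals against $dV^u$, define $\widetilde\calZ^u_t=\zeta_0+\int_0^t e^{(t-s)\calA}\nu\,dV^u(s)$, derive its SDE, and shift by $\zeta_0$ to obtain \eqref{Forward_lifted_1}. The extra care you take with the factorization $e^{(t-s)\calA}=e^{t\calA}e^{-s\calA}$, the product rule for the uniformly continuous semigroup, and the uniqueness of the lifted SDE is precisely the justification behind the paper's ``it is easy to check'' step, so no gap remains.
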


Using Theorem \ref{th: H-valued representation}, one can rewrite the performance functional $J(u)$ from \eqref{eq: optimization problem} as 
\begin{equation}\label{performance_functional_extra_term}
    J^g (u) = \E\left[-\int_0^Ta_1u^2(s)ds+a_2\langle g,\calZ^u_T\rangle\right]+a_2\langle g,\zeta_0\rangle ,
\end{equation}
where the superscript $g$ in $J^g$ is used to highlight dependence on the $\mathbb H$-valued process $\calZ^u$. Clearly, maximizing \eqref{performance_functional_extra_term} is equivalent to maximizing 
\[
    J^g (u) - a_2\langle g,\zeta_0\rangle = \E\left[-\int_0^Ta_1u^2(s)ds+a_2\langle g,\calZ^u_T\rangle\right].
\]
Finally, for the sake of notation and coherence with literature, we will sometimes write our maximization problem as a minimization one by simply noticing that the maximization of the performance functional $J^g (u) - a_2\langle g,\zeta_0\rangle$ can be reformulated as the minimization of 
\begin{equation}\label{eq: optimization functional bar}
        \bar J^g(u) := - J^g (u) + a_2\langle g,\zeta_0\rangle = \E\left[\int_0^Ta_1u^2(s)ds - a_2\langle g,\calZ^u_T\rangle\right].
\end{equation}

In other words, in case of $\mathbb H$-liftable kernel $K$, the original optimal control problem \eqref{eq: optimization problem} can be replaced by the following one:
\begin{equation}\label{eq: lifted problem}
    \begin{cases}
        \calZ_t^u=\int_0^t\Big(\bar \calA\calZ_s^u-\nu\beta\langle g,\zeta_0\rangle+\nu \alpha u(s)\Big) ds +\int_0^t \sigma \nu dW(s),
        \\
        \bar J^g(u) := \E\left[\int_0^Ta_1u^2(s)ds - a_2\langle g,\calZ^u_T\rangle\right] \to \min,
    \end{cases} \quad u\in L^2_a. 
\end{equation}

\begin{remark}
    The machinery described above can also be generalized for strongly continuous semigroups on Banach spaces, see e.g. \cite{CTMulti,CT}. However, for our purposes it is sufficient to consider the case when $\calA$ is a linear bounded operator on a Hilbert space.
\end{remark}


\subsection{Solution to the lifted problem}

The optimal control problem \eqref{eq: lifted problem} completely fits the framework of dynamic programming principle stated in \cite[Chapter 6]{FGSw2017}. More precisely, consider the \textit{Hamilton-Jacobi-Bellman} (HJB) equation associated to the problem \eqref{eq: lifted problem} of the form
\begin{equation}\label{HJB no approx}
\begin{cases}
    \frac{\partial}{\partial t}v(t,z)&=-\frac 12 \text{Trace}\Big(\langle g,z\rangle \langle g,z\rangle \nabla^2v(t,z) \Big)-\langle \calA z -\beta\langle g,z\rangle\nu -\beta \langle g,\zeta_0\rangle\nu,\nabla v(t,z)\rangle
    \\
    &\quad-\calH(t,z,\nabla v(t,z)),
    \\
    v(T,z) &=-\langle a_2 g,z\rangle,
\end{cases}
\end{equation}
where by $\nabla v$ we denote the partial Gateaux derivative w.r.t. the spacial variable $z$ and the Hamiltonian functional $\calH: [0,T]\times\bbH^2 \to \R$ is defined as
\begin{equation*}
    \calH(t,z,\xi) := \inf_{u\in \mathbb R}\left\{a_1u^2+\alpha\langle\xi,\nu \rangle u \right\} = -\frac{\alpha^2 \langle \xi,\nu \rangle^2}{4 a_1}.
\end{equation*}
It is easy to check that the coefficients of the lifted forward equation \eqref{Forward_lifted_1} satisfy \cite[Hypothesis 6.8]{FGSw2017}, the Hamiltonian $\calH$ satisfies \cite[Hypothesis 6.22]{FGSw2017} and the term $-a_2 \langle g, \calZ^u_T \rangle$ in the performance functional \eqref{eq: optimization functional bar} satisfies \cite[Hypothesis 6.26]{FGSw2017}. Therefore, by \cite[Theorem 6.32]{FGSw2017} (see also \cite[Theorem 6.2]{FT}), \eqref{HJB no approx} has a unique mild solution $v$: $[0,T]\times \mathbb H \to \mathbb R$. Moreover, \cite[Theorem 6.35]{FGSw2017} implies that for any $u\in L^2_a$ 
\[
    \bar J^g (u) \ge v\left(0, 0\right)
\]
and the equality holds if and only if
\begin{equation}\label{eq: quadratic eq for u}
    a_1 u^2(t) + \alpha \left\langle \nabla v(t, \calZ^u_t) , \nu \right\rangle u(t) = - \frac{\alpha^2 \langle \nabla v(t, \calZ^u_t), \nu \rangle^2}{4a_1}, \quad t\in[0,T].
\end{equation}
Solving \eqref{eq: quadratic eq for u}, we obtain $\hat u(t)$, $\t$, which has the form
\begin{equation}\label{eq: shape of u-hat}
    \hat u(t) = - \frac{\alpha \langle \nabla v(t, \calZ^u_t), \nu \rangle}{2a_1}.
\end{equation}

\begin{remark}
    In general, \cite[Theorem 6.35]{FGSw2017} does not guarantee that $\hat u$ exists on the initial probability space, but instead considers the weak control framework, see \cite[Section 6]{FGSw2017} for more details. However, in our case optimal control exists in the strong sense and, as we will see later, $\hat u$ turns out to be deterministic. 
\end{remark}

Since the shape \eqref{eq: shape of u-hat} of the optimal control $\hat u$ depends on $\nabla v(t, \calZ^u_t)$, our next goal is to explicitly solve the HJB equation \eqref{HJB no approx}. The solution as well as the optimality statement are given in the next theorem.

\begin{theorem}\label{th: optimal control}
    1. The solution of the HJB equation \eqref{HJB no approx} associated with the lifted problem \eqref{eq: lifted problem} has the form
    \[
        v(t,z)=\langle w(t),z\rangle +c(t),
    \]
    where
    \begin{equation}\label{w_noepsilon}
        w(t) = - a_2 e^{-(t-T)\bar\calA^*} g , \quad \t,
    \end{equation}
    $\bar\calA^* = \calA^* - \beta \langle \nu, \cdot \rangle g$, and
    \begin{equation}\label{c_noepsilon}
        c(t) = -\int_t^T \left(\beta X(0) \langle w(s),\nu\rangle + \frac{\alpha^2}{4a_1}\langle w(s) ,\nu\rangle^2\right)ds, \quad \t.
    \end{equation}
    
    2. The solution $\hat u$ of the optimal control problem \eqref{eq: lifted problem} (and thus of the problem \eqref{eq: optimization problem}) has the form
    \begin{equation}\label{eq: optimal u}
        \hat u(t)=-\frac{\alpha}{2a_1}\langle w(t),\nu \rangle
        =\frac{\alpha a_2}{2a_1 }\langle g, e^{(T-t)\bar \calA}\nu\rangle,
    \end{equation}
    where $\bar\calA = \calA -\beta\langle g, \cdot \rangle \nu$.
\end{theorem}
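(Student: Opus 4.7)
The plan is to guess an affine ansatz for the value function:
\[
    v(t,z) = \langle w(t),z\rangle + c(t),
\]
with $w:[0,T]\to\mathbb{H}$ and $c:[0,T]\to\mathbb{R}$ to be determined. This ansatz is natural because the terminal datum $v(T,z) = -\langle a_2 g, z\rangle$ is itself affine in $z$, and because the HJB problem is of linear-quadratic type: the second derivative of $v$ enters only through the trace term, which will vanish for any $v$ linear in $z$, while the Hamiltonian $\calH(t,z,\xi) = -\alpha^2\langle \xi,\nu\rangle^2/(4a_1)$ depends only on $\nabla v$ and so stays compatible with this structure.

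Under the ansatz, $\nabla v(t,z) = w(t)$, $\nabla^2 v(t,z) = 0$, and $\frac{\partial}{\partial t}v(t,z) = \langle w'(t),z\rangle + c'(t)$. Plugging into \eqref{HJB no approx} and using $\langle \calA z, w\rangle = \langle z, \calA^* w\rangle$ together with $\beta\langle g,z\rangle\langle \nu,w\rangle = \langle z,\beta\langle \nu,w\rangle g\rangle$, the $z$-dependent terms collapse to
\[
    \langle w'(t),z\rangle = -\bigl\langle z,\, \calA^* w(t) - \beta\langle \nu, w(t)\rangle g\bigr\rangle = -\langle z,\bar\calA^* w(t)\rangle,
\]
so I obtain the linear ODE $w'(t) = -\bar\calA^* w(t)$ in $\mathbb{H}$ with terminal condition $w(T) = -a_2 g$ read off from the boundary datum. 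Since $\bar\calA^* \in \calL(\mathbb{H})$, this ODE is uniquely solved by $w(t) = -a_2 e^{-(t-T)\bar\calA^*} g$, giving \eqref{w_noepsilon}. The remaining $z$-independent terms read
\[
    c'(t) = \beta\langle g,\zeta_0\rangle \langle w(t),\nu\rangle + \frac{\alpha^2}{4a_1}\langle w(t),\nu\rangle^2,
\]
and combined with $\langle g,\zeta_0\rangle = X(0)$ and $c(T)=0$, integration backwards in time produces \eqref{c_noepsilon}.

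For part~2, since $\nabla v(t,z) = w(t)$ is deterministic and independent of $z$, formula \eqref{eq: shape of u-hat} immediately yields $\hat u(t) = -\frac{\alpha}{2a_1}\langle w(t),\nu\rangle$, which is deterministic and hence trivially lies in $L^2_a$. The second equality in \eqref{eq: optimal u} follows by substituting the explicit form of $w$ and moving the semigroup to the other side of the inner product via $\langle e^{(T-t)\bar\calA^*}g,\nu\rangle = \langle g, e^{(T-t)\bar\calA}\nu\rangle$.

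The step I expect to need the most care is not the computation itself but the justification that the affine candidate is actually \emph{the} solution supplied by \cite[Theorem 6.32]{FGSw2017}, and that the optimality follows from the verification theorem \cite[Theorem 6.35]{FGSw2017} in the \emph{strong} rather than weak control sense. The cleanest way is to observe that since $\bar\calA^* \in \calL(\mathbb{H})$, both $w$ and $c$ are smooth in $t$, so $v$ is of class $C^{1,2}([0,T]\times \mathbb{H})$ with bounded Gateaux derivatives and therefore a classical, hence a fortiori a mild, solution; by the uniqueness statement in \cite[Theorem 6.32]{FGSw2017} this must coincide with the value function. The resulting $\hat u$ is deterministic and admissible on the original probability space, which removes the weak-control caveat in the remark preceding the theorem, and the pointwise equality in \eqref{eq: quadratic eq for u} verifies optimality.
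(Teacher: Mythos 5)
Your proposal is correct and follows essentially the same route as the paper: the affine ansatz $v(t,z)=\langle w(t),z\rangle+c(t)$ (the ``splitting'' method of \cite[Section 3.3]{GMS2009}), splitting the HJB into the linear terminal-value ODE $w'+\bar\calA^*w=0$, $w(T)=-a_2 g$, integrating backwards for $c$, and then invoking \cite[Theorem 6.35]{FGSw2017} together with \eqref{eq: shape of u-hat} and the adjoint identity $\langle e^{(T-t)\bar\calA^*}g,\nu\rangle=\langle g,e^{(T-t)\bar\calA}\nu\rangle$ for optimality. Your added remark that smoothness of $w$ and $c$ makes the affine candidate a classical, hence mild, solution identified with the value function by uniqueness is a sensible explicit justification of a step the paper leaves implicit, but it does not change the argument.
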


\begin{proof}
    1. In order to solve the HJB equation \eqref{HJB no approx}, we will use the approach presented in \cite[Section 3.3]{GMS2009}. Namely, we will look for the solution in the form 
    \begin{equation}\label{shape v}
         v(t,z)=\langle w(t),z\rangle +c(t),
    \end{equation}
    where $w(t)$ and $c(t)$ are such that $\frac{\partial}{\partial t} v$ and $\nabla v$ are well-defined. In this case,
    \begin{align*}
      \frac{\partial}{\partial t} v(t,z)=\langle w'(t),z\rangle +c'(t),\quad \nabla v(t,z)=w(t), \quad \nabla^2v(t,z)=0, 
    \end{align*}
    and, recalling that $\langle g, \zeta_0 \rangle = X(0)$, we can rewrite the HJB equation \eqref{HJB no approx} as
    \begin{equation*}
    \begin{cases}
        \langle w'(t),z\rangle + \langle z , \bar\calA^* w(t)\rangle + c'(t) - \beta X(0) \langle w(t),\nu\rangle - \frac{\alpha^2}{4a_1}\langle w(t) ,\nu\rangle^2 =0
        \\    
        \langle w(T),z\rangle +c(T) =-\langle a_2 g,z\rangle.
    \end{cases}
    \end{equation*}
    Now it would be sufficient to find $w$ and $c$ that solve the following systems:
    \begin{equation}\label{two systems}
        \begin{cases}
            {\langle w'(t),z\rangle + \langle z , \bar\calA^* w(t)\rangle} = 0
            \\
            {\langle w(T),z\rangle +\langle a_2 g,z\rangle} = 0
        \end{cases}; \quad
        \begin{cases}
            {c'(t) - \beta X(0) \langle w(t),\nu\rangle - \frac{\alpha^2}{4a_1}\langle w(t) ,\nu\rangle^2} =0
            \\
            {c(T)} = 0,
        \end{cases}
    \end{equation}
    Noticing that the first system in \eqref{two systems} has to hold for all $z\in \mathbb H$, we can solve
    \[
        \begin{cases}
            w'(t) + \bar\calA^* w(t) = 0,
            \\
            w(T)+a_2 g = 0
        \end{cases}
    \]
    instead, which is a simple linear equation and its solution has the form \eqref{w_noepsilon}. Now it is easy to see that $c$ has the form $\eqref{c_noepsilon}$ and
    \[
        v(t,z) = \langle w(t), z \rangle + c(t), \quad \t.
    \]
    
    2. The result follows directly from the item 1 above and \cite[Theorem 6.35]{FGSw2017}.
\end{proof}

\begin{remark}
    By defining $$J(u,t, X(0)) := \E\left[-\int_t^Ta_1u^2(s)ds + a_2 X^u(T)\right],$$ and thus taking $J(u) := J(u,0, X(0))$ in \eqref{eq: optimization problem}, and consequently $\bar J^g(u) := \bar J^g(u,0, 0 )$ in \eqref{eq: optimization functional bar}, we obtain that Theorem \ref{th: optimal control} can be seen as a classical verification theorem, with statement $v(0,0) = \min_u \bar J^g(u,0,0).$
\end{remark}

\begin{remark}
     The approach described above can be extended by lifting to Banach space-valued stochastic processes. See \cite{DNG_Lift} for more details.
\end{remark}


\section{Approximate solution for forwards with H\"older kernels}\label{sec: approximation}

The crucial assumption in section \ref{sec: lift} that allowed to apply the optimization techniques in Hilbert space was the liftability of the kernel. However, in practice it is often hard to find a representation of the required type for the given kernel, and even if this representation is available, it is not always convenient from the implementation point of view. For this reason, we provide a liftable approximation for the Volterra Ornstein-Uhlenbeck process \eqref{Forward_introduction} for a general $C^h$-kernel $K$, where $C^h([0,T])$ denotes the set of $h$-Hölder continuous functions on $[0,T]$.

This section is structured as follows: first we approximate an arbitrary $C^h$-kernel by a liftable one in a uniform manner and introduce a new optimization problem where the forward dynamics is obtained from the original one  replacing the kernel $K$ with its liftable approximation. Afterwards, we prove that the optimal value of the approximated problem converges to the optimal value of the original problem and give an estimate for the rate of convergence. Finally, we discuss some numerical aspects that could be useful from the implementation point of view. 

\begin{remark}\label{rem: C is not important}
    In what follows, by $C$ we will denote any positive constant the particular value of which is not important and may vary from line to line (and even within one line). By $\|\cdot\|_2$ we will denote the standard $L^2(\Omega\times[0,T])$-norm.
\end{remark}


\subsection{Liftable approximation for Volterra Ornstein-Uhlenbeck processes with H\"older continuous kernels}

Let $K\in C([0,T])$, $\bbH=L^2(\R)$, the operator $\calA$ be the 1-shift operator acting on $\bbH$, i.e.
\[
    (\calA f)(x) = f(x+1), \quad f\in \bbH,
\]
and denote $K_n$ a Bernstein polynomial approximation for $K$ of order $n\ge 0$, i.e.
\begin{equation}\label{eq: liftable approximation}
\begin{aligned}
    K_n(t) &= \frac{1}{T^n}\sum_{k=0}^n K\left(\frac{Tk}{n}\right) \binom{n}{k}t^k(T-t)^{n-k} 
    \\
    &=: \sum_{k=0}^n \kappa_{n,k} t^k, \quad t\in [0,T],
\end{aligned}    
\end{equation}
where
\begin{equation}\label{eq: choice of kappas}
    \kappa_{n,k} := \frac{1}{T^k}\sum_{i=0}^k (-1)^{k-i} K\left(\frac{iT}{n}\right)\binom{n}{i}\binom{n-i}{k-i}.
\end{equation}
Observe that 
\begin{align*}
    (e^{t\calA} \mathbbm 1_{[0,1]})(x) = \sum_{k=0}^\infty \frac{t^k}{k!} \left[\calA ^k \mathbbm 1_{[0,1]}\right](x) = \sum_{k=0}^\infty \frac{t^k}{k!} \mathbbm 1_{[-k,-k+1]}(x)
\end{align*}
and hence $K_n$ is \textit{$\bbH$-liftable} as
\[
    K_n (t) = \left\langle g_n, e^{t\calA} \nu \right\rangle_{\bbH} = \sum_{k=0}^n \kappa_{n,k} t^k, \quad t\in[0,T],
\]
with $g_n := \sum_{k=0}^n k! \kappa_{n,k} \mathbbm 1_{[-k,-k+1]}$ and $\nu := \mathbbm 1_{[0,1]}$.

By the well-known approximating property of Bernstein polynomials, for any $\varepsilon > 0$,  there exist $n = n(\varepsilon) \in \mathbb N_0$ such that  
\[
    \sup_{t\in[0,T]} \left| K(t) - K_n(t) \right| < \varepsilon.
\]
Moreover, if additionally $K \in C^h([0,T])$ for some $h\in(0,1)$, \cite[Theorem 1]{Bernstein} guarantees that for all $\t$
\begin{equation}\label{eq: order of Bernstein approximation}
    |K(t)-K_n(t)|\leq H\left(\frac{t(T-t)}{n}\right)^{h/2} \le \frac{H T^h}{2^h} n^{-\frac{h}{2}}, 
\end{equation}
where $H>0$ is such that
\begin{equation}\label{eq: Holder cond for K in remark}
    |K(t)-K(s)|\leq H |t-s|^h, \quad s,\t.
\end{equation}

Now, consider a controlled Volterra Ornstein-Uhlenbeck process $\{X^u(t),~\t\}$ of the form \eqref{Forward_introduction} with the kernel $K \in C^h([0,T])$ satisfying \eqref{eq: Holder cond for K in remark}. For a given admissible $u$ define also a stochastic process $\{X^u_n (t),~t\in [0,T]\}$ as a solution to the stochastic Volterra integral equation of the form
\begin{equation}\label{X_epsilon}
        X_n^u(t)=X(0)+\int_0^t K_n(t-s) \Big(\alpha u(s)-\beta X_n^u(s)\Big)ds+\sigma\int_0^t K_n(t-s) dW(s), \quad t\in[0,T],
\end{equation}
where $K_n (t) = \sum_{k=0}^n \kappa_{n,k} t^k$ with $\kappa_{n,k}$ defined by \eqref{eq: choice of kappas}, i.e. the Bernstein polynomial approximation of $K$ of degree $n$. 

\begin{remark}\label{rem: Holder continuity of the noise}
      It follows from \cite[Corollary 4]{Holder} that both stochastic processes $\int_0^t K(t-s)dW(s)$ and $\int_0^t K_n(t-s)dW(s)$, $\t$, have modifications that are Hölder continuous at least up to the order $h \wedge \frac 1 2$. From now on, these modifications will be used.
\end{remark}

Now we move to the main result of this subsection.

\begin{theorem}\label{th: conv of Xeps}
    Let $K\in C^h([0,T])$, $u \in L^2_a$, and $X^u$, $X^u_n$ are given by \eqref{Forward_introduction} and \eqref{X_epsilon} respectively. Then there exists $C>0$ which does not depend on $n$ or $u$ such that for any admissible $u\in L^2_a$:
    \begin{equation*}
        \sup_{t\in[0,T]} \E\left[|X^u(t) -X^u_{n}(t)|^2\right]\leq C (1+\|u\|^2_2) n^{-h}.
    \end{equation*}
\end{theorem}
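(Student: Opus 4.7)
\bigskip

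\noindent\textbf{Proof plan.} The plan is to subtract the two integral equations \eqref{Forward_introduction} and \eqref{X_epsilon}, split the resulting error into three terms that decouple the ``kernel perturbation'' from the ``state perturbation'', bound them separately, and close the estimate with Gronwall's inequality. Concretely, by adding and subtracting $\beta K_n(t-s) X^u(s)$ in the drift one writes
\begin{align*}
    X^u(t)-X^u_n(t) &= \int_0^t \bigl(K(t-s)-K_n(t-s)\bigr)\bigl(\alpha u(s)-\beta X^u(s)\bigr)\,ds \\
    &\quad - \beta \int_0^t K_n(t-s)\bigl(X^u(s)-X^u_n(s)\bigr)\,ds \\
    &\quad + \sigma \int_0^t \bigl(K(t-s)-K_n(t-s)\bigr)\,dW(s),
\end{align*}
so that $\E[|X^u(t)-X^u_n(t)|^2]$ is controlled by three times the sum of the expected squares of the three terms on the right-hand side.

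First I would handle the two ``kernel perturbation'' terms. Using Cauchy--Schwarz for the Bochner integral and the Itô isometry for the stochastic one, both are dominated by $T\|K-K_n\|_\infty^2$ times quantities of the order $1+\|u\|_2^2$; here the uniform bound $\E[|X^u(s)|^2]\le C(1+\|u\|_2^2)$ is supplied by item 2) of Theorem \ref{th: appendix theorem}. Invoking the Bernstein approximation rate \eqref{eq: order of Bernstein approximation}, which gives $\|K-K_n\|_\infty \le C n^{-h/2}$, these two contributions are already of order $n^{-h}(1+\|u\|_2^2)$, matching the target bound.

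For the third term I would apply Cauchy--Schwarz to obtain
\[
    \E\Bigl[\Bigl(\int_0^t K_n(t-s)(X^u(s)-X^u_n(s))\,ds\Bigr)^2\Bigr] \le \|K_n\|_2^2 \int_0^t \E\bigl[|X^u(s)-X^u_n(s)|^2\bigr]\,ds,
\]
and note that $\|K_n\|_2 \le \|K\|_2 + \sqrt{T}\,\|K-K_n\|_\infty$ is bounded uniformly in $n$. Combining the three estimates yields an inequality of the form
\[
    \E\bigl[|X^u(t)-X^u_n(t)|^2\bigr] \le C(1+\|u\|_2^2)\,n^{-h} + C\int_0^t \E\bigl[|X^u(s)-X^u_n(s)|^2\bigr]\,ds,
\]
to which Gronwall's lemma can be applied, producing the claim with constants that depend only on $T, H, \alpha, \beta, \sigma, \|K\|_2$ and not on $n$ or $u$.

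The routine parts are the $L^2$ and Itô-isometry estimates, which follow exactly the pattern used in the proof of Theorem \ref{th: appendix theorem}; the only place where attention is required is ensuring that the constant coming from $\|K_n\|_2$ in the Gronwall step is genuinely uniform in $n$, which is guaranteed by the sup-norm approximation bound \eqref{eq: order of Bernstein approximation}. Consequently there is no essential obstacle, and the rate $n^{-h}$ is inherited directly from squaring the $n^{-h/2}$ pointwise rate of Bernstein approximation for $C^h$ functions.
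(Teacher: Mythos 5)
Your argument is correct and follows essentially the same route as the paper's proof: subtract the two Volterra equations, add and subtract $K_n(t-s)X^u(s)$ in the drift, bound the kernel-perturbation terms by Cauchy--Schwarz and the It\^o isometry together with the Bernstein rate \eqref{eq: order of Bernstein approximation} and the moment bound from Theorem \ref{th: appendix theorem}, keep the $K_n(X^u-X^u_n)$ term with a constant uniform in $n$, and close with Gronwall. Apart from the cosmetic point that your ``third term'' is actually the second term in your own decomposition, there is nothing to change.
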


\begin{proof}
    First, by Theorem \ref{th: appendix theorem}, there exists a constant $C>0$ such that
    \begin{equation}\label{eq: bound on second moment for X}
        \sup_{\t}\E[|X^u(t)|^2]\leq C(1+ \|u\|_2^2).
    \end{equation}
    Consider an arbitrary $\tau \in [0,T]$, and denote $\Delta(\tau):=\sup_{t\in[0,\tau]} \E\left[|X^u(t) -X^u_{n}(t)|^2\right]$. Then
    \begin{align*}
        \Delta(\tau)&=\sup_{t\in[0,\tau]}\E\Bigg[\Bigg|\int_0^t K(t-s)\Big(\alpha u(s)-\beta X^u(s)\Big)ds +\int_0^t K_n(t-s)\Big(\alpha u(s)-\beta X_n^u(s)\Big)ds 
        \\
        &\quad +\int_0^t \sigma \Big( K(t-s)-K_n(t-s)\Big)dW(s)\Bigg|^2\Bigg]
        \\
        &\leq C \sup_{t\in[0,\tau]}\E\Bigg[ \int_0^t\Bigg|   \Big( K(t-s)-K_n (t-s) \Big)u(s)  \Bigg|^2 ds\Bigg]
        \\
        &\quad+C\sup_{t\in[0,\tau] } \E\Bigg[ \int_0^t \Bigg|K_n(t-s)\Big(X^u(s)-X_n^u(s)\Big)\Bigg|^2 ds\Bigg]
        \\
        &\quad+ C\sup_{t\in[0,\tau]}\E\Bigg[ \int_0^t\Bigg| X^u(s) \Big( K(t-s)-K_n(t-s)\Big)\Bigg|^2 ds\Bigg]
        \\
        &\quad +C  \sup_{t\in[0,\tau]}\E\left[\Bigg|\int_0^t \Big(K(t-s)-K_n(t-s)\Big) dW(s)\Bigg|^2  \right].
    \end{align*}
    Note that, by \eqref{eq: order of Bernstein approximation} we have that
    \begin{align*}
        \sup_{t\in[0,\tau]}\E\Bigg[ \int_0^t\Bigg|   \Big( K(t-s)-K_n (t-s) \Big)u(s)  \Bigg|^2 ds\Bigg] & \le Cn^{-h} \lVert u \rVert^2_2.
    \end{align*}
    Moreover, since $\{K_n,~n\ge 1\}$ are uniformly bounded due to their uniform convergence to $K$ it is true that
    \begin{align*}
        \sup_{t\in[0,\tau] } \E\Bigg[ \int_0^t \Bigg|K_n(t-s)\Big(X^u(s)-X_n^u(s)\Big)\Bigg|^2 ds\Bigg] & \le C \int_0^\tau \Delta(s)ds
    \end{align*}
    with $C$ not dependent on $n$, and from \eqref{eq: order of Bernstein approximation}, \eqref{eq: bound on second moment for X} one can deduce that
    \begin{align*}
        \sup_{t\in[0,\tau]}\E\Bigg[ \int_0^t\Bigg| X^u(s) \Big( K(t-s)-K_n(t-s)\Big)\Bigg|^2 ds\Bigg] \le C n^{-h} (1+ \lVert u \rVert^2_2).
    \end{align*}
    Lastly, by the Ito isometry and \eqref{eq: order of Bernstein approximation},
    \[
        \sup_{t\in[0,\tau]}\E\left[\Bigg|\int_0^t \Big(K(t-s)-K_n(t-s)\Big) dW(s)\Bigg|^2  \right] \le C n^{-h}.
    \]
    Hence
    \[
        \Delta(\tau) \le Cn^{-h} (1+\|u\|_2^2)+  C \int_0^t \Delta(s) ds,
    \]
    where $C$ is a positive constant (recall that it may vary from line to line). The final result follows from Gronwall's inequality.
\end{proof}


\subsection{Liftable approximation of the optimal control problem}

As it was noted before, our aim is to find an approximate solution to the the optimization problem \eqref{eq: optimization problem} by solving the liftable problem of the form
\begin{equation}\label{eq: approximated optimization problem}
    \begin{cases}
        X_n^u(t)=X(0)+\int_0^t K_n(t-s)\Big(\alpha u(s)-\beta X_n^u(s)\Big)ds+\sigma\int_0^t K_n(t-s) dW(s),
        \\
        J_n(u) := \E\left[-\int_0^Ta_1 u^2(s) ds+a_2X_n^u (T) \right] \to \max,
    \end{cases}
\end{equation}
where the maximization is performed over $u\in L^2_a$. In \eqref{eq: approximated optimization problem}, $K_n$ is the Bernstein polynomial approximation of $K\in C^h([0,T])$, i.e. 
\[
    K_n(t) = \langle g_n, e^{t\calA} \nu\rangle, \quad \t,
\]
where $\calA \in \mathcal{L} \left( \bbH\right)$ acts as $(\calA f)(x+1)$, $\nu = \mathbbm 1_{[0,1]}$ and $ g_n = \sum_{k=0}^n k! \kappa_{n,k} \mathbbm 1_{[-k,-k+1]}$ with $\kappa_{n,k}$ defined by \eqref{eq: choice of kappas}. Due to the liftability of $K_n$, the problem \eqref{eq: approximated optimization problem} falls in the framework of section \ref{sec: lift}, so, by Theorem \ref{th: optimal control}, the optimal control $\hat u_n$ has the form \eqref{eq: optimal u}:
\begin{equation}\label{eq: form of hat un}
    \hat u_n(t) = \frac{\alpha a_2}{2a_1 }\langle g_n, e^{(T-t)\bar \calA_n} \nu \rangle, \quad \t,
\end{equation}
where $\bar\calA_n := \calA -\beta\langle g_n, \cdot \rangle \nu$. The goal of this subsection is to prove the convergence of the optimal performance in the approximated dynamics to the actual optimal, i.e. 
\[
    J_n(\hat u_n) \to \sup_{u\in L^2_a} J(u), \quad n \to \infty,
\]
where $J$ is the performance functional from the original optimal control problem \eqref{eq: optimization problem}. 
\begin{proposition}\label{prop: u cannot be big}
    Let the kernel $K \in C^h([0,T])$. Then
    \begin{equation}\label{propeq: J conv 1}
        \sup_{n\in \mathbb N} J_n (u) \to - \infty \qquad\text{as } \|u\|_2 \to \infty,
    \end{equation}
    \begin{equation}\label{propeq: J conv 2}
        J(u) \to - \infty \qquad \text{as } \|u\|_2 \to \infty,
    \end{equation}
    where $\|\cdot\|_2$ denotes the standard $L^2(\Omega\times[0,T])$ norm.
\end{proposition}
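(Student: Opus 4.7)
The plan is to exploit the linear-quadratic structure of the functionals: the running cost $-a_1\|u\|_2^2$ is a negative quadratic in $\|u\|_2$, while the terminal reward $a_2\E[X^u(T)]$ can be bounded by a linear function of $\|u\|_2$. Since a negative quadratic dominates any linear growth at infinity, both $J(u)$ and $J_n(u)$ (uniformly in $n$) must go to $-\infty$.

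For the statement \eqref{propeq: J conv 2}, I would first recall that by item 2 of Theorem \ref{th: appendix theorem} there exists a constant $C>0$ such that $\E[|X^u(T)|^2]\leq C(1+\|u\|_2^2)$. Combining this with Jensen's/Cauchy-Schwarz inequality yields
\[
    |a_2\E[X^u(T)]|\leq a_2\sqrt{\E[|X^u(T)|^2]}\leq C(1+\|u\|_2),
\]
so that
\[
    J(u)=-a_1\|u\|_2^2+a_2\E[X^u(T)]\leq -a_1\|u\|_2^2+C(1+\|u\|_2),
\]
and the right-hand side tends to $-\infty$ as $\|u\|_2\to\infty$.

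For \eqref{propeq: J conv 1} I need the same estimate on $\E[|X^u_n(T)|^2]$ but \textbf{with a constant $C$ independent of $n$}. This is the only non-routine point, and it can be handled in two equivalent ways: either by inspecting the proof of Theorem \ref{th: appendix theorem} and noticing that the constant there depends only on $\|K_n\|_2$, which is uniformly bounded in $n$ thanks to the uniform convergence $K_n\to K$ on $[0,T]$; or more directly by invoking Theorem \ref{th: conv of Xeps}, which gives
\[
    \E[|X^u_n(T)|^2]\leq 2\E[|X^u(T)|^2]+2\E[|X^u(T)-X^u_n(T)|^2]\leq C(1+\|u\|_2^2)
\]
with $C$ not depending on $n$. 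Then, exactly as above,
\[
    J_n(u)\leq -a_1\|u\|_2^2+C(1+\|u\|_2),
\]
uniformly in $n$, which gives $\sup_n J_n(u)\to -\infty$ as $\|u\|_2\to\infty$.

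The only step that requires care is the uniform-in-$n$ second-moment bound on $X^u_n(T)$; everything else is an elementary observation about quadratic functions. Using Theorem \ref{th: conv of Xeps} makes this step essentially a one-line application of the triangle inequality, so there is no real obstacle.
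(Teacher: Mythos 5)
Your proof is correct, and it takes a genuinely different (and lighter) route than the paper. You reduce everything to second-moment bounds: for \eqref{propeq: J conv 2} you use item 2 of Theorem \ref{th: appendix theorem} plus Cauchy--Schwarz, and for \eqref{propeq: J conv 1} you obtain the uniform-in-$n$ bound $\E[|X^u_n(T)|^2]\le C(1+\lVert u\rVert_2^2)$ either by noting that the constant in the proof of Theorem \ref{th: appendix theorem} depends only on $\lVert K_n\rVert_2$, which is uniformly bounded since $K_n\to K$ uniformly, or by a triangle inequality with Theorem \ref{th: conv of Xeps}, whose constant is explicitly independent of $n$ and $u$ (and whose proof precedes this proposition and does not use it, so there is no circularity). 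The paper instead estimates the \emph{first} moment $\E[|X^u_n(T)|]$ through a pathwise bound involving $\sup_{r\in[0,T]}|G_n(r)|$ with $G_n(t)=\int_0^t K_n(t-s)\,dW(s)$, and the bulk of its proof is devoted to showing $\sup_n\E\bigl[\sup_{r\in[0,T]}|G_n(r)|\bigr]<\infty$ via the uniform H\"older continuity of the Bernstein approximations and a Garsia--Rodemich--Rumsey-type moment argument, before concluding with Gronwall and the same quadratic-versus-linear comparison. Your approach buys a much shorter argument that avoids the H\"older/running-maximum machinery entirely (at a fixed time the It\^o isometry already gives $\E[|G_n(t)|]\le\lVert K_n\rVert_2$, uniformly in $n$); the paper's argument establishes a stronger uniform control on the running maximum of the noise terms, but that extra strength is not needed for the statement being proved. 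The final step in both cases is identical: $J_n(u)\le -a_1\lVert u\rVert_2^2+C(1+\lVert u\rVert_2)$ uniformly in $n$, which tends to $-\infty$ as $\lVert u\rVert_2\to\infty$.
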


\begin{proof}
    We prove only \eqref{propeq: J conv 1}; the proof of \eqref{propeq: J conv 2} is the same. Let $u\in L^2_a$ be fixed. For any $n\in \mathbb N$ denote
    \[
        G_n(t) := \int_0^t K_n(t-s) dW(s), \quad \t,
    \]
    and notice that for any $\t$ we have that
    \begin{align*}
        |X_n^u(t)| &\le X(0) + \alpha \int_0^t |K_n (t-s)| |u(s)| ds + \beta \int_0^t  |K_n (t-s)||X^u_n (s)| ds  + \sigma \left| G_n(t)\right|
        \\
        &\le C\left(1 + \left(\int_0^T u^2(s) ds\right)^{\frac{1}{2}} + \int_0^t |X_n^u(s)|ds + \sup_{r\in[0,T]}\left| G_n(r)\right|\right),
    \end{align*}
    where $C > 0$ is a deterministic constant that does not depend on $n$, $t$ or $u$ (here we used the fact that $K_n \to K$ uniformly on $[0,T]$). Whence, for any $n\in \mathbb N$,
    \begin{align}\label{eq: pre-Gronwall}
        \mathbb E \left[ |X_n^u(t)| \right] \le C\left( 1 + \|u\|_2 + \int_0^t \E \left[|X_n^u(s)|\right]ds + \mathbb E\left[\sup_{r\in[0,T]}\left| G_n(r) \right|\right]\right).
    \end{align}
    Now, let us prove that there exists a constant $C>0$ such that
    \[
        \sup_{ n\in \mathbb N } \mathbb E\left[\sup_{r\in[0,T]}\left| G_n(r) \right|\right] < C.
    \]
    First note that, by Remark \ref{rem: Holder continuity of the noise}, for each $n\in \mathbb N$ and $\delta \in \left(0, \frac{h}{2} \wedge \frac{1}{4}\right)$ there exists a random variable $\Upsilon_n = \Upsilon_n (\delta)$ such that
    \[
        \left|G_n(r_1) - G_n(r_2)\right| \le \Upsilon_n |r_1 - r_2|^{h\wedge \frac 12 - 2\delta}
    \]
    and whence
    \[
        \sup_{r\in[0,T]}\left| G_n(r) \right| \le T^{h\wedge \frac 12 - 2\delta} \Upsilon_n.
    \]
    Thus it is sufficient to check that $\sup_{n\in \mathbb N}\E \Upsilon_n < \infty$. It is known from \cite{Holder} that one can put
    \[
        \Upsilon_n := C_\delta \left( \int_0^T \int_0^T \frac{|G_n (x) - G_n (y)|^{p}}{|x-y|^{(h\wedge \frac{1}{2} - \delta)p+1}}dxdy \right)^{\frac{1}{p}},
    \]
    where $p:=\frac{1}{\delta}$ and $C_\delta > 0$ is a constant that does not depend on $n$. Let $p' > p$. Then Minkowski integral inequality yields
    \begin{equation}\label{eq: estim for Upsilon}
    \begin{aligned}
        \left(\E \Upsilon^{p'}_n \right)^{\frac{p}{p'}} &= C_\delta^{p} \left( \E\left[\left( \int_0^T \int_0^T \frac{|G_n (x) - G_n (y)|^{p}}{|x-y|^{(h\wedge \frac{1}{2} - \delta)p+1}}dxdy \right)^{\frac{p'}{p}}\right] \right)^{\frac{p}{p'}}
        \\
        & \le C_\delta^{p} \int_0^T \int_0^T \frac{\left(\E\left[ |G_n (x) - G_n (y)|^{p'}\right]\right)^{\frac{p}{p'}}}{|x-y|^{(h\wedge \frac{1}{2} - \delta)p+1}}dxdy.
    \end{aligned}
    \end{equation}
    Note that, by \cite[Proposition 2]{Bernstein}, every Bernstein polynomial $K_n$ that corresponds to $K$ is Hölder continuous of the same order $h$ and with the same constant $H$, i.e.
    \begin{equation*}
        |K_n(t)-K_n(s)|\leq H|t-s|^h, \quad s,\t,
    \end{equation*}
    whenever
    \[
        |K(t)-K(s)|\leq H|t-s|^h, \quad s,\t.
    \]
    This implies that there exists a constant $C$ which does not depend on $n$ such that 
    \begin{align*}
        \E\left[ |G_n (x) - G_n (y)|^{p'}\right] &= C \left( \int_0^{x\wedge y} (K_n(x-s) - K_n(y-s))^2ds + \int^{x\vee y}_{x\wedge y} K^2_n(x \vee y-s)ds \right)^{\frac{p'}{2}}
        \\
        &\le C|x-y|^{p'(h\wedge \frac{1}{2})}.
    \end{align*}
    Plugging the bound above to \eqref{eq: estim for Upsilon}, we get that
    \begin{align*}
        \left(\E\left[ \Upsilon^{p'}_n\right] \right)^{\frac{p}{p'}} &\le  C \int_0^T \int_0^T |x-y|^{(h\wedge \frac 12) p - (h\wedge \frac{1}{2} -\delta)p-1}dxdy
        \\
        &= C \int_0^T \int_0^T |x-y|^{ -1 + \delta p}dxdy 
        \\
        & < C,
    \end{align*}
    where $C>0$ denotes, as always, a deterministic constant that does not depend on $n$, $t$, $u$ and may vary from line to line. 
    
    Therefore, there exists a constant, again denoted by $C$ not depending on $n$, $t$ or $u$ such that
    \[
        \sup_{n\in \mathbb N} \mathbb E\left[ \Upsilon_{n} \right] < C
    \]
    and thus, by \eqref{eq: pre-Gronwall},
    \[
        \mathbb E \left[ |X_n^u(t)| \right] \le C\left( 1 + \|u\|_2 + \int_0^t \E\left[ |X_n^u(s)|\right]ds\right).
    \]
    By Gronwall's inequality, there exists $C>0$ which does not depend on $n$ such that
    \[
        \mathbb E \left[ |X_n^u(T)| \right] \le C(1+\|u\|_2),
    \]
    and so 
    \[
        \sup_{n\in \mathbb N} J_n (u) \le C(1+ \|u\|_2) - \|u\|_2^2 \to -\infty, \quad \|u\|_2\to\infty.
    \]
\end{proof}

\begin{theorem}
    Let $K \in C^h([0,T])$ and $K_n$ be its Bernstein polynomial approximation of order $n$. Then there exists constant $C>0$ such that
    \begin{equation}\label{eq: rate of convergence to the optimal}
        \Big|J_n(\hat u_n) - \sup_{u\in L^2_a} J(u)\Big| \le C n^{-\frac{h}{2}}.
    \end{equation}
    Moreover, $\hat u_n$ is ``\emph{almost optimal}'' for $J$ in the sense that there exists a constant $C>0$ such that
    \[
        \Big|J(\hat u_n) - \sup_{u\in L^2_a} J(u)\Big| \le C n^{-\frac{h}{2}}.
    \]  
\end{theorem}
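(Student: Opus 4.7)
The plan is to transfer optimality between the two problems by combining the $L^2$-proximity of Theorem \ref{th: conv of Xeps} with the uniform coercivity from Proposition \ref{prop: u cannot be big}. First, I would observe that $J$ and $J_n$ differ only in the terminal term, so by Jensen's inequality and Theorem \ref{th: conv of Xeps},
\begin{equation*}
    \bigl|J(u)-J_n(u)\bigr| = a_2\bigl|\E[X^u(T)-X_n^u(T)]\bigr| \le a_2\sqrt{\E[|X^u(T)-X_n^u(T)|^2]} \le C(1+\lVert u\rVert_2)\,n^{-h/2},
\end{equation*}
with $C$ independent of both $u$ and $n$. This single estimate will be the workhorse.

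The next step is to restrict both $\sup J$ and $\sup J_n$ to a uniform ball $\{u : \lVert u\rVert_2 \le M\}$. Since $\hat u_n$ is optimal for $J_n$, the estimate above applied at $u=0$ gives
\begin{equation*}
    J_n(\hat u_n) \ge J_n(0) \ge J(0) - C n^{-h/2},
\end{equation*}
which is bounded below uniformly in $n$ for $n$ large. By \eqref{propeq: J conv 1} of Proposition \ref{prop: u cannot be big}, there is $M>0$ such that $\sup_{n} J_n(u) < J(0) - 1$ whenever $\lVert u\rVert_2 > M$, so $\lVert\hat u_n\rVert_2\le M$ for every sufficiently large $n$ (and the finitely many remaining optimizers are bounded one-by-one, so $M$ can be enlarged to a bound valid for all $n$). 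Symmetrically, \eqref{propeq: J conv 2} places a maximizing sequence of $J$ inside the same ball.

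Once this uniform confinement is available, the first estimate upgrades to $|J(u)-J_n(u)| \le C n^{-h/2}$ uniformly over $\lVert u\rVert_2 \le M$, and the standard fact that uniformly close functions have close suprema yields
\begin{equation*}
    \bigl|\sup_{u\in L^2_a} J_n(u) - \sup_{u\in L^2_a} J(u)\bigr| \le C n^{-h/2},
\end{equation*}
which proves the first claim because $J_n(\hat u_n) = \sup_u J_n(u)$. For the ``almost optimal'' part, the triangle inequality and $\lVert\hat u_n\rVert_2\le M$ directly give
\begin{equation*}
    \bigl|J(\hat u_n) - \sup_{u\in L^2_a} J(u)\bigr| \le \bigl|J(\hat u_n) - J_n(\hat u_n)\bigr| + \bigl|J_n(\hat u_n) - \sup_{u\in L^2_a} J(u)\bigr| \le C n^{-h/2}.
\end{equation*}

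I expect the main obstacle to be the uniform-in-$n$ confinement $\lVert\hat u_n\rVert_2 \le M$: this cannot be extracted from pointwise coercivity of each $J_n$ in isolation, and is precisely why Proposition \ref{prop: u cannot be big} was stated with $\sup_n J_n(u) \to -\infty$ rather than as a pointwise statement. Everything else is a routine combination of Theorem \ref{th: conv of Xeps} with Jensen's inequality, applied on a bounded region of controls.
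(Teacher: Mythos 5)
Your proposal is correct and follows essentially the same route as the paper's proof: the uniform estimate $|J_n(u)-J(u)|\le C(1+\lVert u\rVert_2)n^{-h/2}$ over bounded sets via Theorem \ref{th: conv of Xeps}, confinement of the optimizers $\hat u_n$ (and of the supremum of $J$) to a fixed ball via Proposition \ref{prop: u cannot be big}, closeness of suprema, and the triangle inequality for the ``almost optimal'' claim. The only cosmetic difference is that the paper gets the confinement for all $n$ at once from $J(0)-C<J_n(0)$, whereas you argue for large $n$ and enlarge the ball for the finitely many remaining indices; both are fine.
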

\begin{proof}
    First, note that for any $r \ge 0$
    \begin{equation}\label{eq: uniform convergence over bounded sets}
        \sup_{u\in B_r} \Big|J_n(u) - J(u)\Big| \le C (1+r^2)^{\frac 1 2} n^{-\frac{h}{2}},
    \end{equation}
    where $B_r := \{u\in L^2_a:~\|u\|_2 \le r\}$. Indeed, by definitions of $J$, $J_n$ and Theorem \ref{th: conv of Xeps}, for any $u\in B_r$:
    \begin{equation}\label{eq: estim of Jeps-J}
    \begin{aligned}
        \Big|J_n(u) - J(u)\Big| & =\Big|\E[X_n^u(T)-X^u(T)]\Big| \le C(1 + \lVert u \rVert^2_2)^{\frac{1}{2}} n^{-\frac{h}{2}}
        \\
        &\le C (1+r^2)^{\frac 1 2} n^{-\frac{h}{2}}.
    \end{aligned}
    \end{equation}
    
    In particular, this implies that there exists $C>0$ that does not depend on $n$ such that $J(0) - C < J_n(0)$, so, by Proposition \ref{prop: u cannot be big}, there exists  $r_0>0$ that does not depend on $n$ such that $\|u\|_2 > r_0$ implies
    \[
        J_n(u) < J(0) - C < J_n(0), \quad n\in\mathbb N.
    \]
    In other words, all optimal controls $\hat u_n$, $n\in\mathbb N$ must be in the ball $B_{r_0}$ and that $\sup_{u\in L^2_a} J(u) = \sup_{u\in B_{r_0}} J(u)$. This, together with uniform convergence of $J_n$ to $J$ over bounded subsets of $L^2_a$ and estimate \eqref{eq: uniform convergence over bounded sets}, implies that there exists $C>0$ not dependent on $n$ such that
    \begin{equation}\label{eq: uniform convergence approx}
        \Big|J_n(\hat u_n) - \sup_{u\in L^2_a} J(u)\Big| \le C n^{-\frac{h}{2}}.
    \end{equation}
    Finally, taking into account \eqref{eq: uniform convergence over bounded sets} and \eqref{eq: uniform convergence approx} as well as the definition of $B_{r_0}$,
    \begin{align*}
        \Big|J(\hat u_n) - \sup_{u\in L^2_a} J(u)\Big| & \le \Big|J(\hat u_n) - J_n(\hat u_n)\Big| + \Big|J_n(\hat u_n) - \sup_{u\in L^2_a} J(u)\Big|
        \\
        &  \le \Big|J(\hat u_n) - J_n(\hat u_n)\Big| + \Big|J_n(\hat u_n) - \sup_{u\in B_{r_0}} J(u)\Big|
        \\
        &\le C n^{-\frac{h}{2}}.
    \end{align*}
    which ends the proof.
\end{proof}

\begin{theorem} 
    Let $K \in C^h([0,T])$ and $\hat u_n$ be defined by \eqref{eq: form of hat un}. Then the optimization problem \eqref{eq: optimization problem} has a unique solution $\hat u \in L^2_a$ and
    \[
        \hat u_n \to \hat u, \quad n\to\infty,
    \]
    in the weak topology of $L^2(\Omega\times[0,T])$.
\end{theorem}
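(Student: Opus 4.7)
The strategy is to split the argument into two independent parts: existence and uniqueness of a maximizer $\hat u$ for the original problem \eqref{eq: optimization problem} via the direct method of the calculus of variations, and the weak convergence $\hat u_n \rightharpoonup \hat u$ obtained by combining the value-convergence estimate \eqref{eq: rate of convergence to the optimal} with weak upper semicontinuity of $J$.

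First I would establish existence and uniqueness of $\hat u$. The representation \eqref{eq: LQ-form} exhibits $J$ as a strictly concave quadratic functional on the Hilbert space $L^2_a$: the leading term $-a_1\|u\|_2^2$ is strictly concave, while $a_2(X(0)+\mathcal L u)$ is affine and continuous. Since $u\mapsto \|u\|_2^2$ is continuous and convex, hence weakly lower semicontinuous, the functional $J$ is weakly upper semicontinuous on $L^2_a$. Combined with the coercivity property \eqref{propeq: J conv 2}, the direct method applies: any maximizing sequence is bounded in $L^2_a$, admits a weakly convergent subsequence whose weak limit realizes $\sup_{u\in L^2_a} J(u)$ by weak upper semicontinuity, and strict concavity forces the maximizer $\hat u$ to be unique.

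Next I would establish $\hat u_n \rightharpoonup \hat u$. The proof of the preceding theorem already showed that all $\hat u_n$ lie in a single fixed ball $B_{r_0}\subset L^2_a$, so $\{\hat u_n\}$ is weakly precompact, and additionally the estimate \eqref{eq: rate of convergence to the optimal} together with $|J(\hat u_n)-J_n(\hat u_n)|\le Cn^{-h/2}$ gives $J(\hat u_n)\to \sup_{u\in L^2_a} J(u) = J(\hat u)$. Given any weakly convergent subsequence $\hat u_{n_k}\rightharpoonup u^\ast$, weak upper semicontinuity of $J$ yields
\[
    J(u^\ast) \ge \limsup_{k\to\infty} J(\hat u_{n_k}) = J(\hat u) = \sup_{u\in L^2_a} J(u),
\]
so $u^\ast$ is itself a maximizer and hence $u^\ast = \hat u$ by uniqueness. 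Since every weakly convergent subsequence of $\{\hat u_n\}$ has the same weak limit $\hat u$, a standard subsequence argument in the reflexive space $L^2(\Omega\times[0,T])$ upgrades this to weak convergence of the full sequence.

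The hard part is not really deep: the argument is a routine application of the direct method combined with the quantitative value-convergence already proved. The subtle point requiring care is that it is the \emph{original} functional $J$, not the approximating $J_n$, whose weak upper semicontinuity must be invoked to identify the subsequential limit, which is why the strict concavity visible in \eqref{eq: LQ-form} (and not merely properties of the lifted dynamics) is the essential ingredient. Note that obtaining strong convergence in $L^2(\Omega\times[0,T])$ would demand additional work, e.g.\ upgrading to convergence of norms $\|\hat u_n\|_2\to\|\hat u\|_2$, which is not accessible from the weak framework alone.
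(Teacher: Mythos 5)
Your proof is correct and takes essentially the same route as the paper, which writes $J$ in the linear-quadratic form \eqref{eq: LQ-form} and then simply cites \cite[Theorem 9.2.6]{Allaire_2007} for exactly what you prove by hand (a unique maximizer of the strictly concave, coercive quadratic functional, plus weak convergence of the maximizing sequence $\{\hat u_n\}$, whose maximizing property comes from the preceding theorem), handling adaptedness of $\hat u$ via the determinism of the $\hat u_n$ where you instead use weak closedness of $L^2_a$ --- both are fine. The only quibble is your closing remark: strong $L^2$ convergence is in fact available from your own ingredients, since $\hat u_n \rightharpoonup \hat u$ makes the affine part of \eqref{eq: LQ-form} converge and $J(\hat u_n)\to J(\hat u)$ then forces $\|\hat u_n\|_2\to\|\hat u\|_2$, which together with weak convergence gives convergence in norm.
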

\begin{proof}
    By \eqref{eq: LQ-form}, the performance functional $J$ can be represented in a linear-quadratic form as 
    \[
        J(u) =  -a_1\langle u,u \rangle_{L^2(\Omega\times [0,T]} + a_2 (X(0) + \mathcal L u),
    \]
    where $\mathcal L$: $L^2(\Omega\times[0,T]) \to L^2(\Omega\times[0,T])$ is a continuous linear operator. Then, by \cite[Theorem 9.2.6]{Allaire_2007}, there exists a unique $\hat u \in L^2(\Omega\times[0,T])$ that maximizes $J$ and, moreover, $\hat u_n \to \hat u$ weakly as $n\to\infty$. Furthermore, since all $\hat u_n$ are deterministic, so is $\hat u$; in particular, it is adapted to filtration generated by $W$ which implies that $\hat u \in L^2_a$. 
\end{proof}


\subsection{Algorithm for computing $\hat u_n$}\label{subsec: M}
    
The explicit form of $\hat u_n$ given by \eqref{eq: form of hat un} is not very convenient from the implementation point of view since one has to compute $e^{(T-t) \bar \calA_n}\nu=e^{(T-t)\bar \calA_n}\mathbbm{1}_{[0,1]}$, where $\bar\calA_n := \calA -\beta\langle g_n, \cdot \rangle \mathbbm 1_{[0,1]}$, $(\calA f)(x) = f(x+1)$. A natural way to simplify the problem is to truncate the series 
\begin{equation*}
    \sum_{k=0}^\infty \frac{(T-t)^k}{k!}\bar \calA_{n}^k\mathbbm{1}_{[0,1]} \approx \sum_{k=0}^M \frac{(T-t)^k}{k!}\bar \calA_{n}^k\mathbbm{1}_{[0,1]}
\end{equation*}
for some $M\in\mathbb N$. However, even after replacing $e^{(T-t)\bar\calA_n}$ in \eqref{eq: form of hat un} with its truncated version, we still need to be able to compute $\bar \calA_{n}^k\mathbbm{1}_{[0,1]}$ for the given $k \in \mathbb N $. An algorithm to do so is presented in the proposition below.
\begin{proposition}\label{prop: gammas}
    For any $k\in \mathbb N \cup\{0\}$, 
    \begin{equation*}
        \bar \calA_n^k\mathbbm{1}_{[0,1]}=\sum_{i=0}^k\gamma({i,k})\mathbbm{1}_{[-i,-i+1]},
    \end{equation*}
    where, $\gamma({0,0})=1$ and, for all $k\geq 1$, 
    \begin{equation*}
        \gamma({i,k})=
        \begin{cases}
             \gamma({i-1,k-1}), & i=1,...,k\\
             \sum_{j=0}^{(k-1)\wedge n}(-\beta)j! \kappa_{n,j}\gamma({j,k-1}), & i=0.
        \end{cases}
    \end{equation*}
\end{proposition}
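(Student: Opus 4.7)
The plan is to prove the statement by induction on $k$. The base case $k=0$ is immediate: by definition $\gamma(0,0)=1$, so $\bar\calA_n^0 \mathbbm{1}_{[0,1]} = \mathbbm{1}_{[0,1]} = \gamma(0,0)\mathbbm{1}_{[-0,-0+1]}$, matching the claimed formula.

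For the inductive step, I would assume the formula holds for $k-1$ and apply $\bar\calA_n$ to both sides. By linearity this reduces to computing $\bar\calA_n \mathbbm{1}_{[-i,-i+1]}$ for each $i\in\{0,\ldots,k-1\}$. Using the definition $\bar\calA_n z = \calA z - \beta\langle g_n, z\rangle \mathbbm{1}_{[0,1]}$ together with the action of the shift operator, $(\calA \mathbbm{1}_{[-i,-i+1]})(x) = \mathbbm{1}_{[-i,-i+1]}(x+1) = \mathbbm{1}_{[-i-1,-i]}(x)$. Meanwhile, since $g_n = \sum_{j=0}^n j!\,\kappa_{n,j}\mathbbm{1}_{[-j,-j+1]}$ and the indicators $\{\mathbbm{1}_{[-j,-j+1]}\}_{j\ge 0}$ are pairwise orthogonal in $\bbH = L^2(\R)$, the inner product collapses to $\langle g_n, \mathbbm{1}_{[-i,-i+1]}\rangle = i!\,\kappa_{n,i}$ if $i\le n$ and $0$ otherwise. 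Hence
\begin{equation*}
    \bar\calA_n \mathbbm{1}_{[-i,-i+1]} = \mathbbm{1}_{[-i-1,-i]} - \beta\, i!\,\kappa_{n,i}\,\mathbbm{1}_{[0,1]}\,\mathbbm{1}_{\{i\le n\}}.
\end{equation*}

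Plugging this into $\bar\calA_n \big(\sum_{i=0}^{k-1}\gamma(i,k-1)\mathbbm{1}_{[-i,-i+1]}\big)$ and reindexing the shifted terms via $j = i+1$ produces a sum $\sum_{j=1}^k \gamma(j-1,k-1)\mathbbm{1}_{[-j,-j+1]}$ of ``shifted'' contributions, plus a single $\mathbbm{1}_{[0,1]}$ term with coefficient $\sum_{j=0}^{(k-1)\wedge n}(-\beta)\,j!\,\kappa_{n,j}\gamma(j,k-1)$ coming from the inner-product terms (the truncation at $(k-1)\wedge n$ arising because $\gamma(j,k-1)=0$ for $j>k-1$ and $\langle g_n,\cdot\rangle$ vanishes on $\mathbbm{1}_{[-j,-j+1]}$ for $j>n$). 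Reading off the coefficients of $\mathbbm{1}_{[-i,-i+1]}$ for $i=1,\ldots,k$ and $i=0$ respectively gives exactly the recursion defining $\gamma(i,k)$, closing the induction.

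There is no real obstacle here: the only subtle point is making sure the truncation index $(k-1)\wedge n$ in the $i=0$ case is correct, which requires noting both that the inductive sum only runs up to $i=k-1$ and that $\langle g_n,\mathbbm{1}_{[-i,-i+1]}\rangle$ vanishes for $i>n$ since $g_n$ is supported on $[-n,0]$. Everything else is direct bookkeeping.
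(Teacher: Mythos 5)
Your proof is correct and follows essentially the same inductive argument as the paper: apply $\bar\calA_n$ to the inductive hypothesis, use the shift action $\calA\mathbbm{1}_{[-i,-i+1]}=\mathbbm{1}_{[-i-1,-i]}$ and the orthogonality of the unit-length indicators to evaluate $\langle g_n,\mathbbm{1}_{[-i,-i+1]}\rangle = i!\,\kappa_{n,i}$ (vanishing for $i>n$), and read off the coefficients, including the $(k-1)\wedge n$ truncation. No gaps.
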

\begin{proof}
    The proof follows an inductive argument. The statement for $\gamma({0,0})$ is obvious. Now let
    \begin{equation*}
        \bar \calA_n^{k-1}\mathbbm{1}_{[0,1]}=\sum_{i=0}^{k-1}\gamma({i,k-1})\mathbbm{1}_{[-i,-i+1]}.
    \end{equation*}
    Then 
    \begin{align*}
        \bar \calA_n^{k}\mathbbm{1}_{[0,1]}&=\bar\calA_n\Big(\bar \calA_n^{k-1}\mathbbm{1}_{[0,1]}\Big)
            \\
        &= \sum_{i=0}^{k-1}\gamma({i,k-1})\bar\calA_n\mathbbm{1}_{[-i,-i+1]}
            \\
        &=\sum_{i=1}^{k}\gamma({i-1,k-1})\mathbbm{1}_{[-i,-i+1]}
        \\
        &\quad+\mathbbm{1}_{[0,1]}(-\beta)\left \langle \sum_{j=0}^{k-1}\gamma({j,k-1})\mathbbm{1}_{[-j,-j+1]}, \sum_{j=0}^n j! \kappa_{n,j}\mathbbm{1}_{[-j,-j+1]} \right\rangle 
            \\
        &=\sum_{i=1}^{k}\gamma({i-1,k-1})\mathbbm{1}_{[-i,-i+1]}+\mathbbm{1}_{[0,1]}\sum_{j=0}^{(k-1)\wedge n}(-\beta)j! \kappa_{n,j}\gamma({j,k-1}).
    \end{align*}
\end{proof}

Finally, consider
\begin{align}
    \hat u_{n,M}(t) &:= \frac{\alpha a_2}{2a_1 }\left\langle g_n, \sum_{k=0}^M \frac{(T-t)^k}{k!}\bar \calA_{n}^k\mathbbm{1}_{[0,1]} \right\rangle \nonumber
    \\
    & = \frac{\alpha a_2}{2a_1 }\left\langle \sum_{i=0}^n i! \kappa_{n,i} \mathbbm 1_{[-i,-i+1]} , \sum_{k=0}^M \sum_{i=0}^k \frac{(T-t)^k}{k!}\gamma({i,k})\mathbbm{1}_{[-i,-i+1]} \right\rangle \nonumber
    \\
    & = \frac{\alpha a_2}{2a_1 }\left\langle \sum_{i=0}^n i! \kappa_{n,i} \mathbbm 1_{[-i,-i+1]} , \sum_{i=0}^M \left(\sum_{k=i}^M \frac{(T-t)^k}{k!}\gamma({i,k})\right) \mathbbm{1}_{[-i,-i+1]} \right\rangle \nonumber
    \\
    &= \frac{\alpha a_2}{2a_1 }\sum_{i=0}^{n \wedge M} \sum_{k=i}^M  \frac{i! \kappa_{n,i}\gamma({i,k})}{k!}(T-t)^k \nonumber 
    \\
    &= \frac{\alpha a_2}{2a_1 }\sum_{k=0}^M \left(\sum_{i=0}^{k\wedge n}   \frac{i! \kappa_{n,i}\gamma({i,k})}{k!}\right)(T-t)^k, \label{eq: uMn}
\end{align}
where $\kappa_{n,i}$ are defined by \eqref{eq: choice of kappas} and $\gamma({i,k})$ are from Proposition \ref{prop: gammas}.

\begin{theorem}
    Let $n\in\mathbb N$ be fixed and $M \ge (T-t)\lVert \bar\calA_n\rVert_{\mathcal L}$, where $\lVert\cdot\rVert_{\mathcal L}$ denotes the operator norm. Then, for all $\t$,
    \[
        |\hat u_n (t) - \hat u_{n,M}(t)| \le \frac{\alpha a_2}{2a_1 } \lVert g_n \rVert e^{ (T-t)\|\bar\calA_n\|_{\mathcal L} } \left(1 - e^{ - \frac{(T-t)\|\bar\calA_n\|_{\mathcal L}}{M+1} }\right).
    \]
    Moreover,
    \[
        \sup_{\t}|\hat u_n (t) - \hat u_{n,M}(t)| \le \frac{\alpha a_2}{2a_1 } \lVert g_n \rVert e^{ T\|\bar\calA_n\|_{\mathcal L} } \left(1 - e^{ - \frac{T\|\bar\calA_n\|_{\mathcal L}}{M+1} }\right) \to 0, \quad M \to\infty.
    \]
\end{theorem}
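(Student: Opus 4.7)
The plan is to rewrite the difference $\hat u_n(t)-\hat u_{n,M}(t)$ using the infinite Taylor expansion of the operator semigroup $e^{(T-t)\bar\calA_n}$, and then reduce everything to a scalar bound on the tail of the exponential series via Cauchy--Schwarz and operator-norm estimates.

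Concretely, subtracting \eqref{eq: uMn} from \eqref{eq: form of hat un} I would write
\begin{equation*}
    \hat u_n(t)-\hat u_{n,M}(t)=\frac{\alpha a_2}{2a_1}\left\langle g_n, \sum_{k=M+1}^{\infty}\frac{(T-t)^k}{k!}\bar\calA_n^k\,\mathbbm{1}_{[0,1]}\right\rangle_{\bbH}.
\end{equation*}
Cauchy--Schwarz together with the triangle inequality, the submultiplicativity of the operator norm, and the fact that $\|\mathbbm{1}_{[0,1]}\|_{\bbH}=1$, then gives
\begin{equation*}
    |\hat u_n(t)-\hat u_{n,M}(t)|\leq \frac{\alpha a_2}{2a_1}\|g_n\|\sum_{k=M+1}^{\infty}\frac{x^k}{k!}, \qquad x:=(T-t)\|\bar\calA_n\|_{\mathcal L}.
\end{equation*}
The remainder of the argument is the scalar tail inequality
\begin{equation*}
    \sum_{k=M+1}^{\infty}\frac{x^k}{k!}\leq e^{x}\bigl(1-e^{-x/(M+1)}\bigr), \qquad 0\le x\le M.
\end{equation*}

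This scalar estimate is the main technical step. Rewriting $e^x(1-e^{-x/(M+1)})=e^x-e^{xM/(M+1)}$, the inequality is equivalent to $\sum_{k=0}^{M}x^k/k!\geq e^{xM/(M+1)}$, which in probabilistic form reads $\mathbb{P}(\mathrm{Poisson}(x)\leq M)\geq e^{-x/(M+1)}$. I would tackle this by splitting into two regimes: for $x/(M+1)\geq\log 2$ (so that $e^{-x/(M+1)}\leq 1/2$) one uses the classical fact that the Poisson median satisfies $\mathbb{P}(\mathrm{Poisson}(M)\leq M)\geq 1/2$ for $M\geq 1$ and that $x\mapsto\mathbb{P}(\mathrm{Poisson}(x)\leq M)$ is non-increasing, yielding $\mathbb{P}(\mathrm{Poisson}(x)\leq M)\geq 1/2\geq e^{-x/(M+1)}$; for the complementary regime of small $x/(M+1)$, the bound can be obtained from a sharpened Chernoff/moment-generating-function computation that improves on the elementary Markov inequality $\mathbb{P}(\mathrm{Poisson}(x)\geq M+1)\leq x/(M+1)$.

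Putting the pieces together delivers the first displayed inequality. The uniform bound follows by majorising $T-t\leq T$ in every factor, and the limit $M\to\infty$ for fixed $n$ is immediate since $T\|\bar\calA_n\|_{\mathcal L}$ is a fixed constant and $1-e^{-T\|\bar\calA_n\|_{\mathcal L}/(M+1)}\to 0$. The main obstacle I anticipate is the scalar Poisson tail estimate; the reduction via Cauchy--Schwarz and the passage to the uniform bound in $t$ are routine.
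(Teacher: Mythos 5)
Your reduction is exactly the paper's: Cauchy--Schwarz plus $\|\nu\|_{\bbH}=1$ and submultiplicativity give $|\hat u_n(t)-\hat u_{n,M}(t)|\le \frac{\alpha a_2}{2a_1}\|g_n\|\sum_{k=M+1}^\infty x^k/k!$ with $x=(T-t)\|\bar\calA_n\|_{\mathcal L}$, and everything hinges on the scalar tail inequality $\sum_{k=M+1}^\infty x^k/k!\le e^x\bigl(1-e^{-x/(M+1)}\bigr)$ for $0\le x\le M$. The paper does not prove this inequality either: it simply cites it as a known Poisson-tail result (Samuels, 1965). You instead attempt a proof, and that is where there is a genuine gap.

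Your regime $x\ge (M+1)\log 2$ is fine (median of a Poisson with integer mean plus monotonicity of $x\mapsto\mathbb{P}(\mathrm{Poisson}(x)\le M)$). But note that, under the constraint $x\le M$, this regime is nonempty only when $M\ge \log 2/(1-\log 2)\approx 2.26$, so for $M=1,2$ the entire burden falls on your second regime, and there the argument is only a gesture. The tool you name cannot deliver the bound: the optimized Chernoff/MGF estimate for a Poisson($x$) variable is exactly $\mathbb{P}(\mathrm{Poisson}(x)\ge M+1)\le e^{-x}\bigl(ex/(M+1)\bigr)^{M+1}$, and at $M=1$, $x=1$ this equals $e/4\approx 0.68$, while the bound you need is $1-e^{-1/2}\approx 0.39$ (the true probability is $1-2e^{-1}\approx 0.26$, so the target inequality is true but out of reach of the MGF method); the same failure occurs more generally near $x=M$ for small $M$, i.e.\ precisely inside your ``small $x/(M+1)$'' regime. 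So the key scalar lemma --- the only nontrivial content of the theorem --- remains unestablished in your write-up. To close it, either cite the known result as the paper does, or give an actual proof of $\sum_{k=0}^M x^k/k!\ge e^{xM/(M+1)}$ for $0\le x\le M$ (for instance by induction on $M$, or by comparing the derivatives in $x$ of $\log\sum_{k=0}^M x^k/k!$ and $xM/(M+1)$, noting equality of both sides at $x=0$ and that the case $M=0$ is an identity); a named ``sharpened Chernoff'' does not exist to be invoked here.
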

\begin{proof}
    One has to prove the first inequality and the second one then follows. It is clear that
    \[
        |\hat u_n (t) - \hat u_{n,M}(t)| \le \frac{\alpha a_2}{2a_1 } \lVert g_n \rVert \left \|\sum_{k=M+1}^\infty \frac{(T-t)^k}{k!}\bar\calA_n^k \mathbbm 1_{[0,1]}\right \|
    \]
    and, if $M \ge (T-t)\lVert \bar\calA_n\rVert_{\mathcal L}$, we have that
    \begin{align*}
        \left \|\sum_{k=M+1}^\infty \frac{(T-t)^k}{k!}\bar\calA_n^k \mathbbm 1_{[0,1]}\right \| &\le \sum_{k=M+1}^\infty \frac{\left((T-t)\left\|\bar\calA_n\right\|_{\mathcal L}\right)^k}{k!}
        \\
        & \le e^{ (T-t)\|\bar\calA_n\|_{\mathcal L} } \left(1 - e^{ - \frac{(T-t)\|\bar\calA_n\|_{\mathcal L}}{M+1} }\right),
    \end{align*}
    where we used a well-known result on tail probabilities of Poisson distribution (see e.g. \cite{Samuels1965}).
\end{proof}


\section{Examples and simulations}\label{sec: simulations}

\begin{example}[\emph{monomial kernel}]\label{ex: example of optimization 1}
    Let $N \in \mathbb N$ be fixed. Consider an optimization problem of the form
    \begin{equation}\label{eq: example of optimization 1}
        \begin{cases}
            X^u(t) = X(0)+\int_0^t(t-s)^{N}\Big(u(s)-X^u(s)\Big)ds+\int_0^t(t-s)^{N} dW(s),
            \\
            \E\left[X^u(T) - \int_0^Tu^2(s)ds\right] \to \max,
        \end{cases}
    \end{equation}
    where, as always, we optimize over $u\in L^2_a$. The kernel $K(t) = t^{N}$ is $\bbH$-liftable, 
    \begin{equation*}
        t^{N}=\langle {N}! \mathbbm{1}_{[-N,-N+1]},e^{ t\calA}\mathbbm{1}_{[0,1]}\rangle,
    \end{equation*}
    where $(\calA f)(x) = f(x+1)$, $f\in \bbH$. By Theorem \ref{th: optimal control}, the optimal control for the problem \eqref{eq: example of optimization 1} has the form
    \[
        \hat u(t) = \frac{N!}{2} \langle  \mathbbm{1}_{[-N,-N+1]} , e^{(T-t)\bar \calA}\mathbbm{1}_{[0,1]} \rangle ,
    \]
    where $\bar\calA =\calA-N!\langle \mathbbm{1}_{[-N,-N+1]}, \cdot \rangle \mathbbm{1}_{[0,1]} $. In this simple case, we are able to find an explicit expression for $e^{(T-t)\bar \calA^*} \mathds{1}_{[-i,-i+1]}$. Indeed, it is easy to see that, for any $i\in \mathbb N \cup \{0\}$, $p \in \mathbb N\cup \{0\}$ and $q=0,1,...,N$,
    \[
        \bar\calA^{p(N+1)+q} \mathbbm 1_{[0,1]} = \sum_{j=0}^p (-1)^{p-j} (N!)^{p-j} \mathbbm 1_{[-j(N+1) - q, -j(N+1) - q + 1]}
    \]
    and whence
    \begin{align*}
        \langle  \mathbbm{1}_{[-N,-N+1]},  &e^{(T-t)\bar\calA}\mathds{1}_{[0,1]} \rangle 
        \\
        & = \left\langle \mathbbm{1}_{[-N,-N+1]}, \sum_{p=0}^\infty \sum_{q=0}^{N} \frac{(T-t)^{pN + p + q}}{(pN+p+q)!} \sum_{j=0}^p (-1)^{p-j} (N!)^{p-j} \mathbbm 1_{[-j(n+1) - q, -j(N+1) - q + 1]}   \right\rangle 
        \\
        & = \sum_{p=0}^\infty \frac{(T-t)^{pN + p + N}}{(pN+p+N)!} (-1)^{p} (N!)^{p} 
        \\
        & = (T-t)^{N} E_{N+1, N+1}(-N!(T-t)^{N+1}),
    \end{align*}
    where $E_{a,b}(z) := \sum_{p=0}^\infty \frac{z^p}{\Gamma(ap+b)}$ is the Mittag-Leffler function. This, in turn, implies that
    \begin{equation}\label{eq: explicit u for monomial kernel}
        \hat u(t) = \frac{N!(T-t)^{N}}{2} E_{N+1, N+1}(-N!(T-t)^{N+1}).
    \end{equation}
    
    On Fig. \ref{Fig: convergence}, the black curve depicts the optimal $\hat u$ computed for the problem \ref{eq: example of optimization 1} with $K(t) = t^2$ and $T=2$ using \eqref{eq: explicit u for monomial kernel}; the othere curves are the approximated optimal controls $\hat u_{n,M}$ (as in \eqref{eq: uMn}) computed for $n=1,2,5,10$ and $M=20$.
    \begin{figure}[h!]
        \centering
        \includegraphics[height=0.4\textwidth]{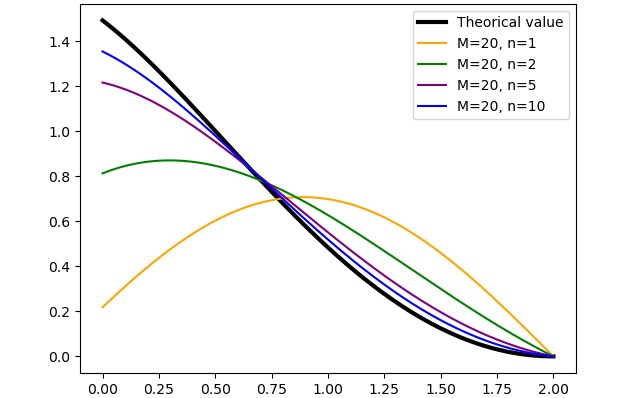}
        \caption{Optimal control of Volterra Ornstein-Uhlenbeck process with monomial kernel $K(t) = t^2$ (in black) and control approximants $\hat u_{n,M}$ .}
        \label{Fig: convergence}
    \end{figure}
\end{example}

\begin{remark}
    The solution of the problem \eqref{eq: example of optimization 1} described in Example \ref{ex: example of optimization 1} should be regarded only as an illustration of the optimization technique via infinite-dimensional lift: in fact, the kernel $K$ in this example is degenerate and thus the process $X^u$ in \eqref{eq: example of optimization 1} is Markovian. This means that other finite dimensional techniques could have been used in this case. 
\end{remark}

\begin{example}[\emph{fractional and gamma kernels}]\label{ex: example of optimization 2}
    Consider three optimization problems of the form
    \begin{equation}\label{eq: example of optimization 2}
        \begin{cases}
            X_i^u(t) = \int_0^t K_i(t-s) \Big(\alpha u(s)- \beta X^u(s)\Big)ds + \int_0^t K_i(t-s) dW(s),
            \\
            \E\left[X_i^u(T) - \int_0^Tu^2(s)ds\right] \to \max,
        \end{cases}\quad i=1,2,3,
    \end{equation}
    $u\in L^2_a$, where the kernels are chosen as follows: $K_1(t) := t^{0.3}$ (fractional kernel), $K_2(t) := t^{1.1}$ (smooth kernel) and $K_3(t) := e^{-t}t^{0.3}$ (gamma kernel). In these cases, we apply all the machinery presented in section \ref{sec: approximation} to find $\hat u_{n,M}$ for each of the optimal control problems described above. In our simulations, we choose $T=2$, $n=20$, $M=50$; the mesh of the partition for simulating sample paths of $X^u$ is set to be $0.05$, $\sigma = 1$, $X(0) = 0$.
    
    Fig. \ref{Fig: Kernels graphs} depicts approximated optimal controls for different values of $\alpha$ and $\beta$. Note that the gamma kernel $K_3(t)$ (third column) is of particularly interest in optimal advertising. This kernel, in fact, captures the peculiarities of the empirical data (see \cite{Leone1995}) since the past dependence comes into play after a certain amount of time (like a delayed effect) and its relevance declines as time goes forward.

\begin{remark}
     Note that the stochastic Volterra integral equation from \eqref{eq: example of optimization 2} can be sometimes solved explicitly for certain kernels (e.g. via the resolvent method). For instance, the solution $X^u$ which corresponds to the fractional kernel of the type $K(t) = t^h$, $h>0$, and $\beta = 1$ has the form
    \[
        X^u(t) = \Gamma(h+1) \int_0^t (t-s)^h E_{h+1,h+1} \left( -\Gamma(h+1)(t-s)^{h+1} \right) \left( \alpha u(s) ds + dW(s) \right), \quad t\in [0,T],
    \]
    where $E_{a,b}$ again denotes the Mittag-Leffler function. Having the explicit solution, one could solve the optimization problem \eqref{eq: example of optimization 2} by plugging in the shape of $X^u$ to the performance functional and applying the standard minimization techniques in Hilbert spaces. However, as mentioned in the introduction, this leads to some tedious calculations that are complicated to implement, whereas our approach allows to get the approximated solution in a relatively simple manner.
\end{remark}

\begin{figure}[h!]
    \centering
    \begin{minipage}[b]{0.32\linewidth}
        \centering
        (1) $K_1(t)=t^{0.3}$
    \end{minipage}
    \begin{minipage}[b]{0.32\linewidth}
        \centering
        (2) $K_2(t) = t^{1.1}$
    \end{minipage}
    \begin{minipage}[b]{0.32\linewidth}
        \centering
        (3) $K_3(t) = e^{-t} t^{0.3}$
    \end{minipage}
    \begin{minipage}[b]{0.32\linewidth}
        \centering
        \includegraphics[width=\textwidth]{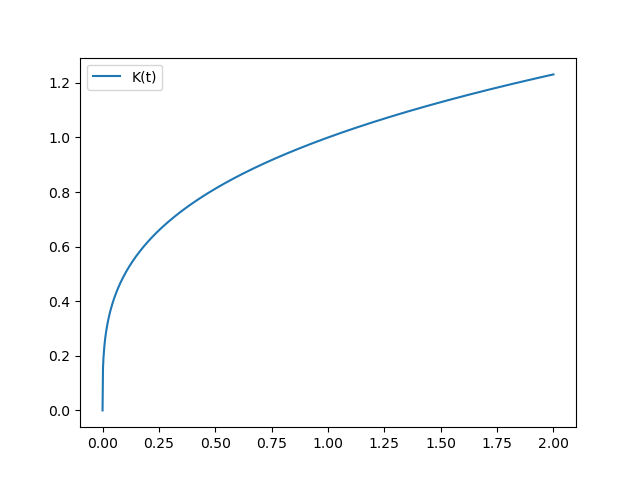}
        (a1)
    \end{minipage}
    \begin{minipage}[b]{0.32\linewidth}
        \centering
        \includegraphics[width=\textwidth]{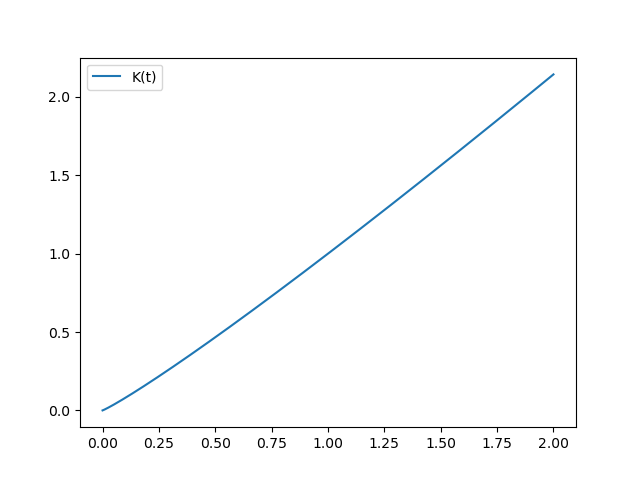}
        (a2)
    \end{minipage}
    \begin{minipage}[b]{0.32\linewidth}
        \centering
        \includegraphics[width=\textwidth]{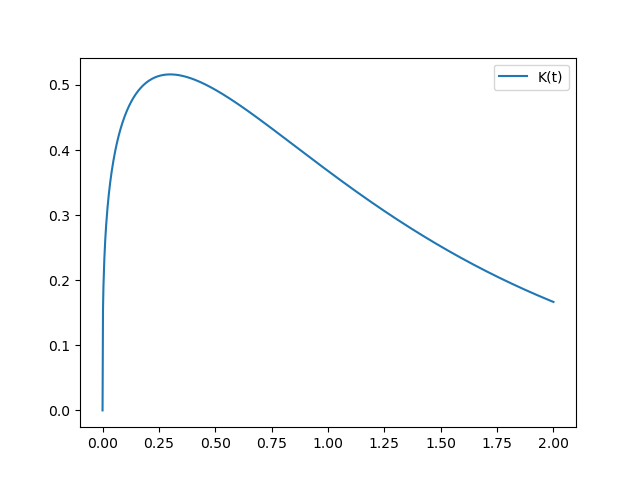}
        (a3)
    \end{minipage}
    \begin{minipage}[b]{0.32\linewidth}
        \centering
        \includegraphics[width=\textwidth]{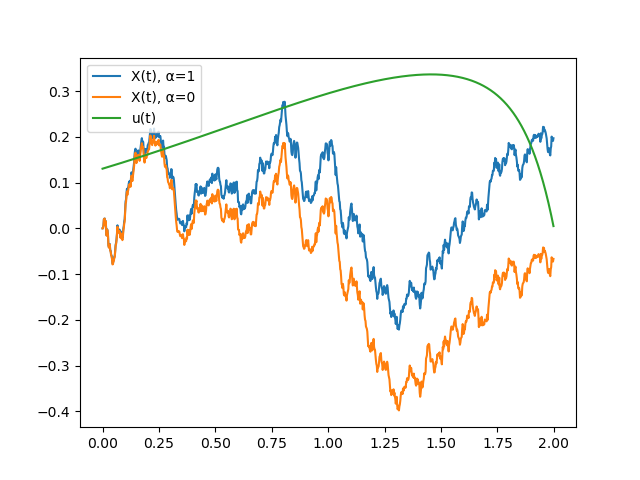}
        (b1)
    \end{minipage}
    \begin{minipage}[b]{0.32\linewidth}
        \centering
        \includegraphics[width=\textwidth]{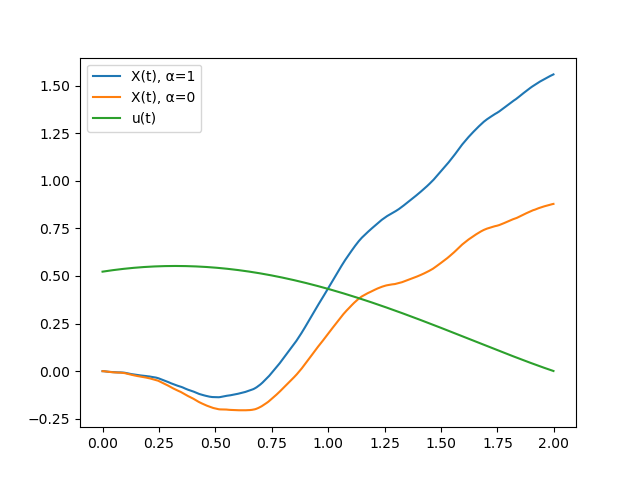}
        (b2)
    \end{minipage}
    \begin{minipage}[b]{0.32\linewidth}
        \centering
        \includegraphics[width=\textwidth]{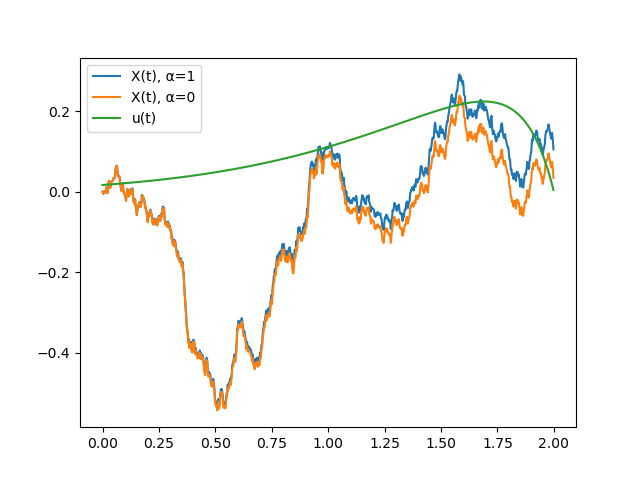}
        (b3)
    \end{minipage}
    \begin{minipage}[b]{0.32\linewidth}
        \centering
        \includegraphics[width=\textwidth]{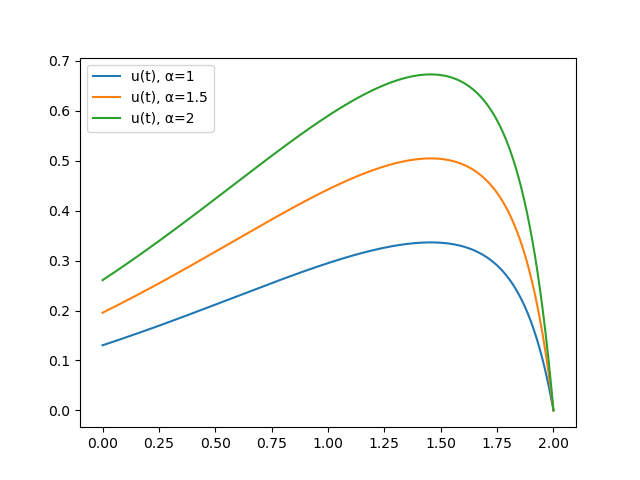}
        (c1)
    \end{minipage}
    \begin{minipage}[b]{0.32\linewidth}
        \centering
        \includegraphics[width=\textwidth]{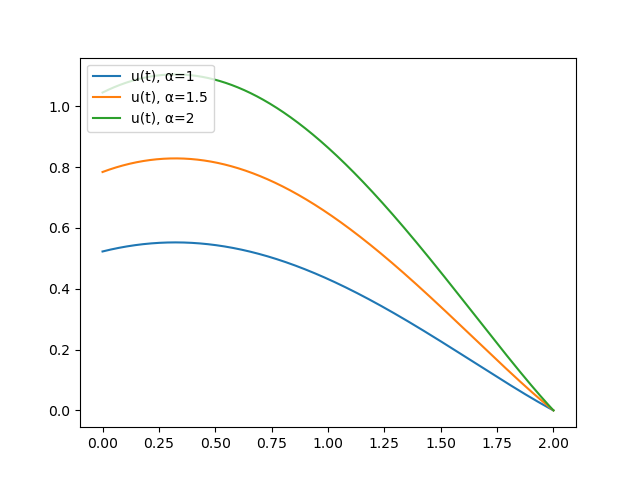}
        (c2)
    \end{minipage}
    \begin{minipage}[b]{0.32\linewidth}
        \centering
        \includegraphics[width=\textwidth]{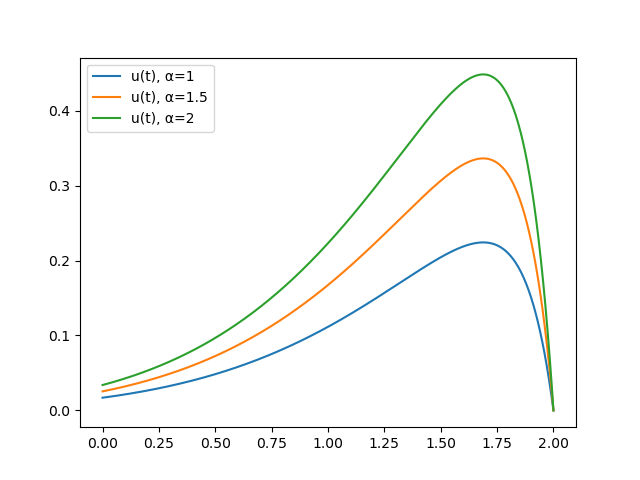}
        (c3)
    \end{minipage}
    \begin{minipage}[b]{0.32\linewidth}
        \centering
        \includegraphics[width=\textwidth]{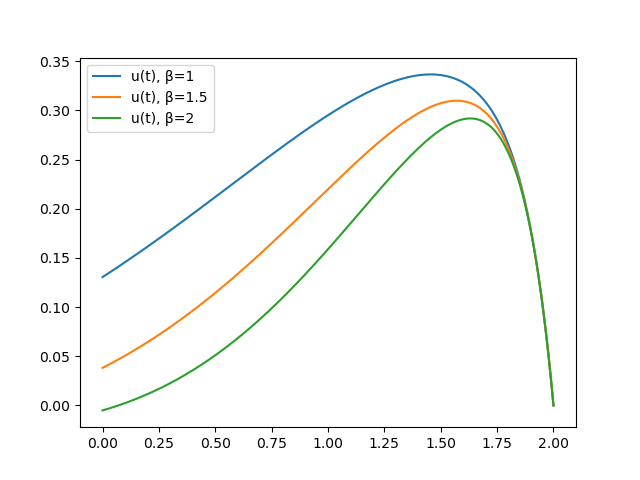}
        (d1)
    \end{minipage}
    \begin{minipage}[b]{0.32\linewidth}
        \centering
        \includegraphics[width=\textwidth]{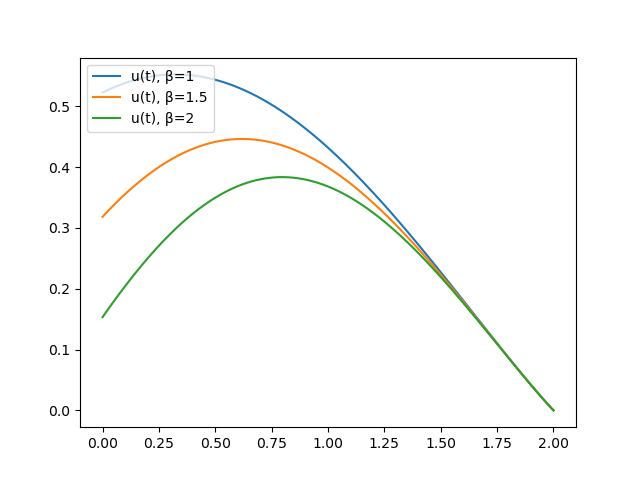}
        (d2)
    \end{minipage}
    \begin{minipage}[b]{0.32\linewidth}
        \centering
        \includegraphics[width=\textwidth]{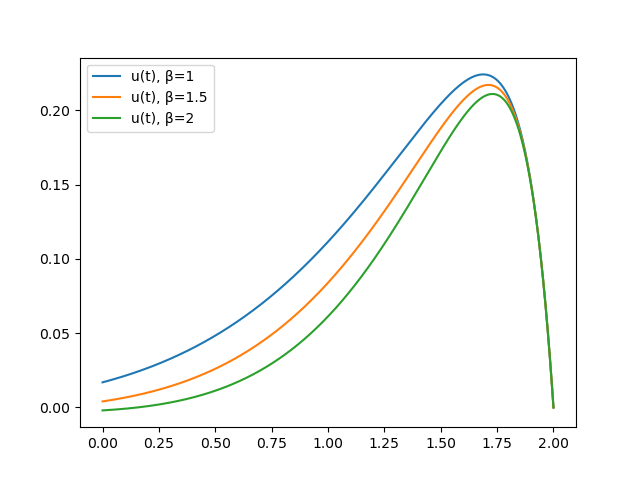}
        (d3)
    \end{minipage}
    \caption{Optimal advertising strategies for control problems with kernels $K_1$--$K_3$ from Example \ref{ex: example of optimization 2}; plots related to the kernel $K_i$ are contained in the $i$th column. Panels (a1)--(a3) depict the graphs of kernels $K_1$--$K_3$; each of (b1)--(b3) represents a sample path of the corresponding $X^u_i(t)$ under optimal control with $\alpha=0$ (orange) and $\alpha=1$ (blue) as well as the approximated optimal control $\hat u_{n,M}$ itself (green). Panels (c1)--(c3) show $\hat u_{n,M}$ for $\alpha=1$ (blue), $\alpha = 1.5$ (orange) and $\alpha = 2$ (green; in all three cases $\beta = 1$), whereas (d1)--(d3) plot the behaviour of $\hat u_{n,M}$ for $\beta=1$ (blue), $\beta = 1.5$ (orange) and $\beta = 2$ (green; in all three cases $\alpha = 1$).}\label{Fig: Kernels graphs}
\end{figure}
\end{example}

\noindent\textbf{Acknowledgments.} Authors would also like to thank Dennis Schroers for the enlightening help with one of the proofs leading to this paper as well as Giulia Di Nunno for the proofreading and valuable remarks.

\newpage
\bibliographystyle{acm}
\bibliography{bib.bib}

\end{document}